\newtheorem{thm}{Theorem}
\newcommand{\vertiii}[1]{{\left\vert\kern-0.25ex\left\vert\kern-0.25ex\left\vert
#1 \right\vert\kern-0.25ex\right\vert\kern-0.25ex\right\vert}}
\def \re   {\text {\rm Re}\, }
\def \diag  {\text {\rm diag}}
\def \cos   {\text {\rm cos}}
\def \min   {\text {\rm min}}
\def \Re    {\text {\rm Re}}
\def \diag  {\text {\rm diag}}
\def \lim   {\text {\rm lim}}
\def \tr    {\text {\rm tr}}
\def \argmin {\text {\rm argmin}}
\begin{document}
\title[]{On the Bures-Wasserstein Distance Between Positive
Definite Matrices}

\author[Rajendra Bhatia]{Rajendra Bhatia}
\address{Ashoka University, Sonepat,\\ Haryana, 131029, India}

\email{rajendra.bhatia@ashoka.edu.in}

\author[Tanvi Jain]{Tanvi Jain}
\address{Indian Statistical Institute\\ New Delhi 110016, India}
\email{tanvi@isid.ac.in}

\author[Yongdo Lim]{Yongdo Lim}

\address{Department of Mathematics, Sungkyunkwan University\\ Suwon 440-746, Korea}

\email{ylim@skku.edu}

\subjclass[2010]{47A63, 47A64, 53C20, 53C22, 60D05, 81P45.}

\keywords{Positive definite matrices, Bures distance, Wasserstein 
metric, Riemannian metric, matrix mean, optimal transport, coupling 
problem, fidelity.}

\begin{abstract}
The metric $d(A,B)=\left[ \tr\, 
A+\tr\, B-2\tr(A^{1/2}BA^{1/2})^{1/2}\right]^{1/2}$ on the manifold
of $n\times n$ positive definite matrices arises in various optimisation problems, in quantum information and in the
theory of optimal transport. It is also related to Riemannian geometry.
In the first part of this paper we study this metric from the perspective of 
matrix analysis, simplifying and unifying various proofs. Then we develop a 
theory of a mean of two, and a barycentre of several,
positive definite matrices with respect to this metric. We explain some recent 
work on a fixed point iteration for computing this Wasserstein barycentre. Our 
emphasis is on ideas natural to matrix analysis.
\end{abstract}

\maketitle

\section{Introduction}
Let $\mathbb{M} (n)$ be the space of  $n\times n$ complex matrices,
$\mathbb{H}(n)$ the real subspace of $\mathbb{M} (n)$ consisting of
Hermitian matrices, and $\mathbb{P}(n)$ the subset of
$\mathbb{H}(n)$ consisting of positive semi definite (psd) matrices.
The Frobenius inner product on $\mathbb{M}(n)$ is defined as
$\langle A,B \rangle = \re\tr A^*B,$  and the associated norm
 $\|A\|_2 = (\tr A^*A)^{1/2}$ is called the Frobenius norm. Every psd matrix
$A$
has a unique psd square root, which we denote by $A^{1/2}.$ Given $ A,B$ in
$\mathbb{P} (n)$ define $d(A,B)$ by the relation
\begin{equation}
d(A,B) = \left [\tr\,A+\tr\,B - 2\tr\,\left(A^{1/2}BA^{1/2}
\right)^{1/2}\right ]^{1/2}.\label{eq1}
\end{equation}
It turns out that $d(A,B)$ is a metric on the space $\mathbb{P}(n)$. This metric
has been of interest in quantum information where it is called the {\it Bures distance,} and in statistics
and the theory of optimal transport where it is called the {\it Wasserstein metric.}
If $A$ and $B$ are diagonal matrices, then $d(A,B)$ reduces to the Hellinger distance between probability distributions and is related to the Rao-Fisher metric in information theory.
The metric $d$ is of interest in differential geometry, as it is the distance function corresponding to a Riemannian metric.

In this paper we explore some fundamental properties of this metric from the perspective of matrix analysis.
This allows us to unify several known facts and to simplify their proofs,
to point out new connections, to raise new questions and to answer some of them.

\section{Some variational principles}

The metric $d(A,B)$ and the quantity $(A^{1/2}BA^{1/2})^{1/2}$ occurring in it, both are
related to solutions of extremal problems arising in different contexts.

Recall that a matrix $A$ is psd if and only if it can be expressed as $A=MM^*$
for some $M \in \mathbb {M}(n).$ Another matrix $N$ satisfies the relation
$A=NN^*$ if and only if $ N=MU$ for some unitary matrix $U.$ One special matrix
among all these is $A^{1/2}.$ Let $U(n)$ stand for the group of all unitary
matrices. Given a psd matrix $A$ let $\mathcal{F}(A)$ be the set defined as
\begin{eqnarray}
\mathcal{F}(A) &=& \left \{M \in \mathbb{M}(n) : A = MM^*\right \} \nonumber\\
&=&\left \{A^{1/2} U :  U \in U (n)\right \}. \label{eq2}
\end{eqnarray}
\begin{thm}\label{thm1}
If $d(A,B)$ is defined as in \eqref{eq1}, then\\
\begin{eqnarray}
d(A,B) &=& \quad \underset{M \in\mathcal{F}(A) \atop {N \in
\mathcal{F}(B)}}{\min} \quad \|M-N\|_2 \nonumber \\
&=& \quad \underset{U \in U(n)}{\min} \,\,\,\,\, \|A^{1/2} -
B^{1/2}U\|_2. \label{eq3}
\end{eqnarray}
The minimum in the last expression in \eqref{eq3} is attained at a
unitary $U$ occurring in the polar decomposition of
$B^{1/2}A^{1/2}:$
\begin{equation*}
B^{1/2}A^{1/2}=U|B^{1/2}A^{1/2}|=U(A^{1/2}BA^{1/2})^{1/2}.
\end{equation*}
\end{thm}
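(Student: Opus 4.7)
The plan is to reduce the two-variable minimization to a one-variable one, then expand the squared Frobenius norm and reduce the problem to maximizing a trace of a unitary-weighted positive matrix, which is solved by polar decomposition.

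First, using the parametrization $\mathcal{F}(A) = \{A^{1/2}V : V \in U(n)\}$ and $\mathcal{F}(B) = \{B^{1/2}W : W \in U(n)\}$ given in \eqref{eq2}, I would write
\[
\|M-N\|_2^2 = \|A^{1/2}V - B^{1/2}W\|_2^2 = \|A^{1/2} - B^{1/2}WV^*\|_2^2
\]
by right-multiplication invariance of the Frobenius norm. As $V,W$ range over $U(n)$, the product $U:=WV^*$ ranges over all of $U(n)$, so the two minimizations in \eqref{eq3} coincide.

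Next, I would expand
\[
\|A^{1/2} - B^{1/2}U\|_2^2 = \tr A + \tr B - 2\,\re\tr(A^{1/2}B^{1/2}U).
\]
Thus proving the theorem is equivalent to showing that $\max_{U\in U(n)} \re\tr(A^{1/2}B^{1/2}U)= \tr(A^{1/2}BA^{1/2})^{1/2}$, with the maximum attained at the unitary factor of the polar decomposition of $B^{1/2}A^{1/2}$. Writing $X = B^{1/2}A^{1/2}$ and $X = U_0 |X|$ with $|X|=(X^*X)^{1/2}=(A^{1/2}BA^{1/2})^{1/2}$, one has $A^{1/2}B^{1/2} = X^* = |X|U_0^*$, so
\[
\re\tr(A^{1/2}B^{1/2}U) = \re\tr\bigl(|X|\, U_0^*U\bigr).
\]

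The final step is the well-known inequality $|\tr(PW)|\le \tr P$ for any positive semidefinite $P$ and any unitary $W$: diagonalizing $P=\sum \lambda_i |e_i\rangle\langle e_i|$, we get $|\tr(PW)|\le \sum \lambda_i|\langle e_i|W|e_i\rangle|\le \sum \lambda_i = \tr P$, with equality when $W$ acts as the identity on the range of $P$. Applying this with $P=|X|$ and $W=U_0^*U$ gives the maximum $\tr|X| = \tr(A^{1/2}BA^{1/2})^{1/2}$, attained when $U=U_0$ (and when $X$ is invertible, this $U$ is unique). Substituting back yields $d(A,B)^2 = \tr A + \tr B - 2\tr(A^{1/2}BA^{1/2})^{1/2}$, as required. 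The only real subtlety is the last inequality, and its equality case, which pins down the optimal $U$; everything else is routine manipulation with the Frobenius inner product and its unitary invariance.
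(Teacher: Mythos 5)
Your proof is correct and follows essentially the same route as the paper's: reduce to a one-variable minimisation over $U(n)$ via unitary invariance of $\|\cdot\|_2$, expand the squared norm, and reduce to maximising $\re\tr(A^{1/2}B^{1/2}U)$, which is handled by the polar decomposition $B^{1/2}A^{1/2}=U_0|B^{1/2}A^{1/2}|$. The only cosmetic difference is in the auxiliary lemma $|\tr(PW)|\le\tr P$ for psd $P$ and unitary $W$: you diagonalise $P$ and invoke Cauchy--Schwarz, whereas the paper diagonalises the unitary $W$ (writing it as $\diag(e^{i\theta_j})$) so that the trace becomes $\sum 2\cos\theta_j\,p_{jj}$ with $p_{jj}\ge0$. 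Both arguments are standard and equivalent in substance.
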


\begin{proof}
We have for every $U \in U(n)$
\begin{eqnarray*}
\|A^{1/2} - B^{1/2} U\|_{2}^{2} &=& \|A^{1/2} \|_{2}^{2} + \|B^{1/2}
\|_{2}^{2}  -\tr (A^{1/2} U^* B^{1/2} +A^{1/2} B^{1/2} U)\\
&=& \tr A + \tr B - \tr (U^*B^{1/2}A^{1/2} +  A^{1/2} B^{1/2} U).
\end{eqnarray*}
Hence,
\begin{eqnarray}
 \lefteqn{\underset{U \in U(n)}{\min} \,\,\,\, \|A^{1/2}-B^{1/2} U\|^2_2}
\nonumber\\
&=& \tr A + \tr B - \underset{U \in U(n)}{\max} \tr (U^*B^{1/2}A^{1/2} +
A^{1/2} B^{1/2} U). \label{eq4}
\end{eqnarray}
To evaluate the maximum in \eqref{eq4} let $X =B^{1/2}A^{1/2}.$ Then $|X|:=
(X^*X)^{1/2} = (A^{1/2}BA^{1/2})^{1/2}.$ Let $X=VP$ be the polar
decomposition of $X,$ where $P=|X|$ and $V$ is unitary. Then

$$\tr(U^*B^{1/2}A^{1/2} +  A^{1/2} B^{1/2} U) =\tr(U^*X + X^* U) =  \tr(U^*VP +
PV^* U).$$

Putting $W=U^*V,$ the last expression above can be written as $\tr (W + W^*)
P.$ Choosing a basis in which $W =
\diag(e^{i\theta_{1}},\cdots,e^{i\theta_{n}})$ we have
$$\tr(W+W^*)P = \sum\limits^{n}_{j=1}(2 \, \cos\,\theta_j) p_{jj}.$$
The maximum value of this is attained when $W=I$ and is equal to
$$\sum\limits^{n}_{j=1} 2p_{jj} = 2 \tr P = 2 \tr |X|.$$
So, from \eqref{eq4} we have
$$ \underset{U \in U(n)}{\min} \|A^{1/2}-B^{1/2} U\|^2_2
=\tr A + \tr B - 2\tr (A^{1/2}B A^{1/2})^{1/2}.$$ This
shows the equality of the two extreme sides of \eqref{eq3}. The
expression in the middle is equal to this because of the unitary
invariance of $\|\cdot\|_2.$ In the course of the proof we saw that
the minimum in \eqref{eq3} is attained when $W=U^*V=I$; i.e., when
$U=V,$ the polar factor for $X=B^{1/2}A^{1/2}.$
\end{proof}

From the representations in \eqref{eq3} it is easy to see that $d(A,B)$ is
indeed a metric. Obviously $d(A,B)\geq 0.$ The compact sets $\mathcal{F}(A)$
and $\mathcal{F}(B)$ are disjoint unless $A=B.$ So $d(A,B)=0$ if and only if
$A=B.$
To prove the triangle inequality, note that for all psd matrices $A,$ $B,$ $C,$ and unitaries $U,$ $V,$ we have
\begin{eqnarray*}
d(A,B) & \le & \|A^{1/2}-B^{1/2}U\|_2\\
 & \le & \|A^{1/2}-C^{1/2}V\|_2+\|B^{1/2}U-C^{1/2}V\|_2\\
 & = & \|A^{1/2}-C^{1/2}V\|_2+\|B^{1/2}-C^{1/2}VU^*\|_2.
\end{eqnarray*}
Taking the minimum over all $U,$ $V,$ we see that $d(A,B) \leq d(A,C) + d (B,C).$
This proof is adopted from \cite{hay}.

A well-known and important problem in factor analysis and in
multidimensional scaling is the orthogonal Procrustes problem. This
asks for the solution of the minimisation problem $\min\,
\|A-BU\|_2$ where $A$ and $B$ are given matrices (not necessarily
psd) and $U$ varies over unitaries. The argument in Theorem
\ref{thm1} shows that the minimum is attained when $U$ is the
unitary polar factor of $B^*A.$ In applications $A$ and $B$
represent multivariate data sets, and the problem is to ascertain
whether they are equivalent up to a rotation. See \cite{hig} for a
brief and \cite{go} for an expansive discussion.

In the following remarks we point out some more connections between the
Bures distance and some other classical problems in matrix analysis.

\begin{enumerate}
 \item[1.]  The expression in \eqref{eq1} is reminiscent of the matrix
arithmetic-geometric mean inequality \cite{bhc}. Indeed this
inequality tells us that for any two psd matrices $\vertiii{A^{1/2}
B^{1/2}}\leq \frac{1}{2} \vertiii{A + B }$ for every unitarily
invariant norm. For the trace norm $\|\cdot \|_1$ the left hand side
of this inequality is equal to $\tr (A^{1/2} BA^{1/2})^{1/2}$ and
the right hand side to $\frac{1}{2} (\tr A + \tr B). $ That these
two quantities are equal if and only if $A=B$ is one of the
assertions included in the statement that $d(A,B)$ is a metric.
(This has been known for the Schatten
p-norms, $1<p<\infty$ \cite{k}, and is false for the case $p = \infty.$)\\
\item[2.] Let $p = (p_1,\ldots, p_n)$ and $q = (q_1,\ldots, q_n)$ be
nonnegative vectors, and let 
\begin{equation}
\rho(p,q) = \left[ \sum_{i=1}^{n} \left(\sqrt{p_i}-\sqrt{q_i}\right)^2
\right ]^{1/2}. \label{eq5}
\end{equation}
This is the $l_2$ norm distance between the square roots of the vectors $p$ and $q.$
If $ p$ and $q$ are probability distributions (i.e., $\sum p_i = \sum q_i =
1$), then $\rho(p,q)$
is called the {\it Hellinger distance}.
In analogy, one could define a distance $\rho(A,B)$ on psd matrices by putting $\rho(A,B)=\|A^{1/2}-B^{1/2}\|_2.$ When $A$ and $B$ commute, the distance $d(A,B)$ is equal to $\rho(A,B).$\\

 \item[3.] A psd matrix $A$ with $ \tr A =1$ is called a {\it density matrix}
or a {\it state}. The quantity
\begin{equation}
 F(A,B) = \tr (A^{1/2} BA^{1/2})^{1/2} \label{eq6}
\end{equation}
is called the {\it fidelity} between two states $A$ and $B.$ In this case from
\eqref{eq1} we see that
\begin{equation}
\frac{1}{2} d^2(A,B) = 1-F(A,B). \label{eq7}
\end{equation}
In the quantum information theory literature it is customary to define the Bures distance between density matrices $A$ and $B$
as the quantity $\sqrt{1-F(A,B)}.$
This is just the distance \eqref{eq1} restricted to density matrices.
An illuminating discussion of the Bures distance from the QIT perspective can be found in \cite{bz}.
If $A=uu^*$ for some unit vector $u,$ then $A$ is called a {\it pure state}. In
this case we have
$$F(A,B) = \langle u, Bu\rangle^{1/2}.$$
If both $A$ and $B$ are pure states given as $A=uu^*,$ $B=vv^*,$ then\\
$$F(A,B) =| \langle u, v\rangle|,$$
and
$$ \frac{1}{2} d^2(A,B) = 1-|\langle u, v\rangle|. $$
\item[4.] The Bures distance is related to a measure of separation between subspaces of $\mathbb{C}^n.$ Let $\mathcal{M}$ and $\mathcal{N}$ be two $l$-dimensional subspaces of $\mathbb{C}^n,$ and let $P$ and $Q$ be the orthogonal projections with ranges $\mathcal{M}$ and $\mathcal{N},$ respectively. Among all unitary operators on $\mathbb{C}^n$ that map $\mathcal{M}$ onto $\mathcal{N},$ there is a special one called a {\it direct rotation}. This unitary operator $U$ can be represented in a particular orthonormal basis as
\begin{equation*}
U=\begin{bmatrix}C & -S & O\\
S & C & O\\
O & O & I\end{bmatrix},
\end{equation*}
where $C$ and $S$ are nonnegative diagonal matrices. If $2l\le n$, then $C$ and $S$ are $l\times l$ matrices, and if $2l>n,$ then they are $(n-l)\times (n-l)$
matrices. Further, $C^2+S^2=I.$ The operator
$\Theta(\mathcal{M},\mathcal{N})=\arccos\, C$ is called the {\it angle operator} between $\mathcal{M}$ and $\mathcal{N}.$
 The diagonal entries of this diagonal operator are called the {\it canonical angles} between the spaces $\mathcal{M}$ and $\mathcal{N}.$
 It can be seen that the nonzero singular values of $PQ$ are the nonzero diagonal entries of $C.$ The direct rotation was used in \cite{dk}
in connection with perturbation theory of eigenvectors. See also \cite{rbh}, Section VII.1 and \cite{s} Chapter II, Section 4.
The fidelity between projections $P$ and $Q$ is the sum of the cosines of the canonical angles between the spaces $\mathcal{M}$ and $\mathcal{N}$:
\begin{equation*}
F(P,Q)=\tr (PQP)^{1/2}=\|PQ\|_1=\sum c_j.
\end{equation*}
Here $c_j$ are the diagonal entries of $C$ if $2l\le n.$ In the case when $2l>n,$ we take $c_j$ to be the diagonal entries of $C$ for $1\le j\le n-l$ and
 take them to be $0$ for $n-l<j\le l.$
They are thus the cosines of the canonical angles between $\mathcal{M}$ and $\mathcal{N}.$
We have
\begin{equation*}
\frac{1}{2} d^2 (P,Q) = \sum (1-c_j).
\end{equation*}

\end{enumerate}

The fidelity $F(A,B)$ is a quantity of great interest and it is useful to have
more descriptions of it. Some variational characterisations of it are given
below. We need some facts from the theory of geometric means. See Chapter 4 of
\cite{rbh1}.\\ Let $A$ and $B$ be positive definite matrices. Their geometric
mean $A\#B$ is defined by the Pusz-Woronowicz formula \cite{pw}
\begin{equation} A\#B =
A^{1/2} (A^{-1/2} B A^{-1/2})^{1/2} A^{1/2}. \label{eq8} \end{equation} This mean
is symmetric in $A$ and $B.$ It is the unique positive definite solution of the
Riccati equation \begin{equation}  X A^{-1}X = B. \label{eq9} \end{equation} The
matrix $AB$ has positive eigenvalues, and it has a unique square root
$(AB)^{1/2}$  that has positive eigenvalues. The eigenvalues of $BA$ are the
same as those of $AB.$ We have \begin{equation} A\#B = A(A^{-1} B)^{1/2} =
(AB^{-1})^{1/2}B. \label{eq10} \end{equation} Another useful characterisation is

\begin{equation} A\#B = \max \left\{X: \left[\begin{array}{cc} A & X \\ X &
B \end{array} \right] \geq 0\right\}. \label{eq11} \end{equation}
Here the
maximum is with respect to the Loewner partial order; for Hermitian matrices $X$
and $Y$ we say $X \geq Y$ if $ X - Y$ is psd. We recall also two necessary and
sufficient conditions for the block matrix \begin{equation}
\left[\begin{array}{cc}A & X \\ X^* & B\end{array}  \right]\label{eq12}
\end{equation} to be psd. The first says that the matrix \eqref{eq12} is psd if
and only if \begin{equation} A \geq X B^{-1} X^*, \label{eq13} \end{equation}
and the second that this is so if and only if there exists a contraction $K$ (an operator with $\|K\|\le 1$)
such that \begin{equation} X = A^{1/2}KB^{1/2}. \label{eq14} \end{equation} See
Chapter 1 of \cite{rbh1}.\\

\begin{thm}\label{thm2}
Let $A$ and $B$ be positive definite matrices.
Then
\begin{eqnarray}
{\rm (i)} \qquad F(A,B) &=& \underset{X>0}{\min}\,\,\, \frac{1}{2} \tr (AX +
BX^{-1}). \label{eq15}\\
{\rm (ii)} \qquad F(A,B) &=& \underset{X>0}{\min}\,\,\,  \sqrt{ \tr (AX)
\tr (BX^{-1})}. \label{eq16}\\
{\rm (iii)} \qquad F(A,B) &=& \underset{X>0}{\max}\left\{|\tr X| : A \geq X B^{-1}X^*
\right\}. \label{eq17}
\end{eqnarray}
\end{thm}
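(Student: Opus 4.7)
For parts (i) and (ii) the strategy is to make the substitution $Y = A^{1/2}XA^{1/2}$, which is a bijection of the positive definite matrices. Using cyclicity of trace one checks $\tr(AX) = \tr Y$ and $\tr(BX^{-1}) = \tr(CY^{-1})$, where $C := A^{1/2}BA^{1/2}$ (so that $F(A,B)=\tr C^{1/2}$). The two claims then reduce to showing, for every $Y>0$,
\[
\tr Y + \tr(CY^{-1}) \;\ge\; 2\tr C^{1/2}, \qquad \tr(Y)\cdot\tr(CY^{-1}) \;\ge\; \bigl(\tr C^{1/2}\bigr)^{2},
\]
each with equality at $Y = C^{1/2}$.

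For (i), I would expand the manifestly nonnegative trace
\[
\tr\!\left[\bigl(Y^{1/2}-C^{1/2}Y^{-1/2}\bigr)^{*}\bigl(Y^{1/2}-C^{1/2}Y^{-1/2}\bigr)\right] \;\ge\; 0
\]
and invoke cyclicity: the two cross terms each contribute $\tr C^{1/2}$, and the inequality drops out. Since a psd matrix has zero trace only if it is zero, equality holds iff $Y^{1/2} = C^{1/2}Y^{-1/2}$, i.e.\ $Y = C^{1/2}$; so the minimiser in (i) is $X = A^{-1/2}(A^{1/2}BA^{1/2})^{1/2}A^{-1/2}$. For (ii), I would apply the Cauchy--Schwarz inequality in the Frobenius inner product to the pair $Y^{1/2}$ and $C^{1/2}Y^{-1/2}$, obtaining
\[
\bigl|\tr\bigl(Y^{1/2}\cdot C^{1/2}Y^{-1/2}\bigr)\bigr|^{2} \;\le\; \tr(Y)\cdot \tr(CY^{-1}),
\]
and cyclicity identifies the left side with $(\tr C^{1/2})^{2}$. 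The same minimiser $Y = C^{1/2}$ achieves equality, so (ii) follows. (Note that (ii) combined with the scalar AM--GM inequality $\frac{1}{2}(u+v)\ge\sqrt{uv}$ gives a second route to (i).)

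For (iii), the plan is to feed the constraint into the two equivalent characterisations of block positivity recorded in \eqref{eq13}--\eqref{eq14}: $A\ge XB^{-1}X^{*}$ if and only if $X = A^{1/2}KB^{1/2}$ for some contraction $K$ with $\|K\|\le 1$. Then
\[
\tr X \;=\; \tr(A^{1/2}KB^{1/2}) \;=\; \tr(B^{1/2}A^{1/2}K),
\]
so the problem collapses to
\[
\max_{\|K\|\le 1}\bigl|\tr\bigl(B^{1/2}A^{1/2}K\bigr)\bigr| \;=\; \bigl\|B^{1/2}A^{1/2}\bigr\|_{1}
\]
by the standard trace-norm/operator-norm duality. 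The singular values of $B^{1/2}A^{1/2}$ are the eigenvalues of $|B^{1/2}A^{1/2}| = (A^{1/2}BA^{1/2})^{1/2}$, so the maximum equals $\tr(A^{1/2}BA^{1/2})^{1/2} = F(A,B)$. The optimal $K$ is $V^{*}$, where $V$ is the unitary polar factor of $B^{1/2}A^{1/2}$ already occurring in Theorem \ref{thm1}, and the resulting optimal $X$ is $A^{1/2}V^{*}B^{1/2}$. No step is deep: the only real ``obstacle'' is spotting the substitution $Y=A^{1/2}XA^{1/2}$ that turns (i) and (ii) into one-line consequences of an operator Cauchy--Schwarz argument, and invoking the trace-norm duality in (iii).
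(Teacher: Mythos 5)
Your proposal is correct, and it takes a genuinely different route from the paper for all three parts. For (i) the paper differentiates the convex function $f(X)=\tr(AX+BX^{-1})$, identifies the critical point via the Riccati equation $X_0AX_0=B$ (so $X_0=A^{-1}\#B$), and evaluates $f$ there. Your substitution $Y=A^{1/2}XA^{1/2}$ followed by the expansion of $\tr\bigl[(Y^{1/2}-C^{1/2}Y^{-1/2})^*(Y^{1/2}-C^{1/2}Y^{-1/2})\bigr]\ge 0$ is an elementary ``complete-the-square'' argument that avoids matrix calculus entirely, and the equality analysis recovers the same optimiser $A^{-1}\#B=A^{-1/2}(A^{1/2}BA^{1/2})^{1/2}A^{-1/2}$. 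For (ii) the paper only records that the value at $X_0$ coincides with $F(A,B)$ (the lower bound is implicitly carried by the inequality $|\tr M|\le\sqrt{\tr(AX)\tr(BX^{-1})}$ inside its proof of (iii)); your Cauchy--Schwarz argument in the Frobenius inner product gives both the lower bound and the equality case in one stroke, so it is a more self-contained proof of (ii). For (iii) the paper first bounds $|\tr M|$ via the Schwarz inequality applied to $A^{1/2}KB^{1/2}$, then sharpens by conjugating the block matrix by $\diag(X^{1/2},X^{-1/2})$ so as to tie the bound to (ii), and exhibits $M=(AB)^{1/2}$; your route goes through the same contraction parametrisation $X=A^{1/2}KB^{1/2}$ but then invokes the duality $\max_{\|K\|\le 1}|\tr(B^{1/2}A^{1/2}K)|=\|B^{1/2}A^{1/2}\|_1=\tr(A^{1/2}BA^{1/2})^{1/2}$ directly, which is cleaner and immediately produces the optimal $X=A^{1/2}V^*B^{1/2}=(AB)^{1/2}$, the same point the paper finds. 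In short: the paper's (i) and (iii) are calculus/conjugation based and its (ii) leans on (iii) for one direction, whereas your proofs are all one-shot algebraic arguments with no external dependence among the parts; you lose the explicit appearance of the Riccati equation, which the paper exploits later, but you gain brevity and self-containment.
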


\begin{proof}
 (i) Consider the function $f(X) =\tr(AX + BX^{-1})$ defined on
$\mathbb{P}(n).$ This is a convex function and its derivative $Df(X)$ is the linear map from
$\mathbb{H}(n)$ into $\mathbb{R}$ given by the formula.
$$Df(X)(Y) =\tr(AY-BX^{-1} YX^{-1}), \,\,\, Y \in \mathbb{H}(n).$$\\
(See \cite{rbh} pp.310 - 312.) So a point $X_0$ is a minimum for $f$ if and only if \\
$$\tr(A - X_0^{-1}B X_0^{-1})Y=0, \,\,\, Y \in \mathbb{H}(n).$$\\
This is so if and only if $A-X_0^{-1} B X_0^{-1} = 0,$ or in other words $
X_0AX_0 = B.$ This is the Riccati equation \eqref{eq9}. So, $X_0 = A^{-1} \#
B.$ We have then $$\underset{X>0}{\min} f(X) = f(X_0)=\tr(A(A^{-1}\#B) +
B(A\#B^{-1})). $$
Using \eqref{eq10}, the right hand side of this equation can be expressed as
\begin{eqnarray*}
\tr(A\cdot A^{-1}(AB)^{1/2} + B(AB)^{1/2} B^{-1}) &=& 2 \tr(AB)^{1/2} \\
&=& 2\tr(A^{1/2} BA^{1/2})^{1/2} = 2F(A,B).
\end{eqnarray*}
This proves (i).\\
(ii) In the proof of (i) above we have seen that at $X_0 = A^{-1} \#B,$ we have
$$\tr AX_0 = \tr BX_0^{-1} = F(A,B).$$ So
$$ \frac{\tr AX_0 +\tr BX_0^{-1} }{2} =\sqrt{\tr (AX_0)  \tr (BX_0^{-1})}.$$\\
This proves (ii).\\
(iii) We have remarked earlier that\\
$$ A\geq MB^{-1} M^* \Leftrightarrow \left[\begin{array}{cc}A & M \\ M^* &
B\end{array} \right] \geq 0 \Leftrightarrow M=A^{1/2} KB^{1/2}$$
for some contraction $K.$ By the Schwarz inequality we have
\begin{eqnarray*}
 |\tr M |=| \tr (A^{1/2} KB^{1/2}| &\leq& \|A^{1/2}
K\|_{2}\|B^{1/2}\|_2\\
&\leq& \|A^{1/2}\|_2 \| B^{1/2}\|_2 =\sqrt{\tr A \tr B}.
\end{eqnarray*}
If $$\left[\begin{array}{cc} A & M \\ M^* & B \end{array} \right]\geq 0 ,$$ then
for every $X > 0$ we have\\
\begin{eqnarray*}
 0 &\leq& \left[\begin{array}{cc} X^{1/2} & O \\ O & X^{-1/2} \end{array}
\right] \left[\begin{array}{cc} A & M \\ M^* & B \end{array}
\right] \left[\begin{array}{cc} X^{1/2} & O \\ O & X^{-1/2} \end{array}
\right]\\
& = &\left[\begin{array}{cc} X^{1/2} AX^{1/2} & X^{1/2} MX^{-1/2}  \\
X^{-1/2} M^*X^{1/2} & X^{-1/2} BX^{-1/2} \end{array} \right].
\end{eqnarray*}
Hence
$$ |\tr X^{1/2} MX^{-1/2} | \leq \sqrt{\tr(X^{1/2} A X^{1/2})\,\,\, \tr
(X^{-1/2} B X^{-1/2})}. $$ \\
In other words,
$$ |\tr M| \leq \sqrt{\tr(A X)\,\,\,\tr (BX^{-1})}. $$
This is true for all $M$ satisfying the condition $ A\geq MB^{-1}M^* $ and for
all $ X>0.$ So
\begin{eqnarray}
 \max \left \{|\tr M| : A \geq MB^{-1} M^* \right\} & \leq & \underset{X>0}{\min}
\sqrt{\tr(AX) \tr(BX^{-1})} \nonumber \\
&=& F(A,B). \label{eq18}
\end{eqnarray}
Let $M=(AB)^{1/2} = A(A^{-1} \# B).$ Then
\begin{eqnarray*}
MB^{-1} M^* &=& (AB)^{1/2} B^{-1} (BA)^{1/2} = B^{-1}B(AB)^{1/2} 
B^{-1}(BA)^{1/2}\\
&=& B^{-1}(BA)^{1/2}(BA)^{1/2}= B^{-1} BA = A.
\end{eqnarray*}
So, the maximum on the left hand side of \eqref{eq18} is attained when $
M=(AB)^{1/2}$ and it is equal to $\tr (A^{1/2} B A^{1/2})^{1/2} = F(A,B).$  This
proves (iii).
\end{proof}

Theorem \ref{thm2} with different proofs can be found in \cite{al,u2}.

\section{The Statistical distance}
Let $X,Y$ be complete separable metric spaces and let $\mu,\nu$ be Borel probability measures
on $X$ and $Y,$ respectively.
Let $\Gamma(\mu,\nu)$ be the collection of probability
measures $\gamma$ on $X\times Y$ whose marginals
are $\mu$ and $\nu.$
Let $c(x,y)$ be a nonnegative Borel measurable function on $X\times Y.$
The {\it optimal transport problem} is the 
minimisation problem of finding
\begin{equation*}
\inf\limits_{\gamma\in\Gamma(\mu,\nu)}\int\limits_{X\times Y}c(x,y)d\gamma(x,y).
\end{equation*}
(Here $\mu,\nu$ are thought of as mass distributions,
and $c(x,y)$ is the cost of moving a unit mass from $X$ to $Y.$
The problem is of moving one mass distribution to another at the least cost.)\\

An important special case of this problem is the following.
Let $X=Y=\mathbb{C}^n,$ let $\mu,\nu$
have finite second moments,
and let $c(x,y)=\|x-y\|^2.$
In this case it can be shown that the quantity
\begin{equation}
d_{W}(\mu,\nu)=\inf\limits_{\gamma\in\Gamma(\mu,\nu)}\left(\int\limits_{\mathbb{R}^n\times\mathbb{R}^n}\|x-y\|^2\, d\gamma(x,y)\right)^{1/2}\label{eqrr19}
\end{equation}
defines a metric, which is called the {\it $2$-Wasserstein distance} between $\mu$ and $\nu.$
The integral on the right hand side of \eqref{eqrr19} is also written as
$E\|x-y\|^2,$ where $E$ stands for expectation.\\

In the most important special case of Gaussian measures, the distance
$d_{W}$ coincides with the Bures distance,
and this is explained below.\\
 
Let $x$ and $y$ be random vectors with values in $\mathbb{C}^n,$ each having
zero mean, and with covariance matrices $A$ and $B,$ respectively. This last
statement means that
\begin{equation}
A = \left[E(\overline{x}_i x_j)\right], \,\,\,\,\ B = \left[E(\overline{y}_i
y_j)\right].
\label{eq20}
\end{equation}
We want to find $x$ and $y$ for which $E\|x-y\|^2$ is minimal.\\

The covariance matrix of the vector $(x,y)$ is
\begin{equation}
 C = \left[\begin{array}{cc} \left[E(\overline{x}_i x_j) \right]
&\left[E(\overline{x}_i y_j)\right]\\
&\\
\left[E(\overline{y}_i x_j) \right] &\left[E(\overline{y}_i y_j)\right]
\end{array} \right] = \left[\begin{array}{cc}A & M \\ M^* & B
\end{array}\right]. \label{eq21}
\end{equation}
Our problem is to minimise
\begin{eqnarray}
 E\|x-y\|^2 &=& E\left(\sum_{i=1}^{n} (|x_i|^2 + |y_i|^2 -2 \Re \,\, 
\overline{x}_i y_i)\right) \nonumber\\
&=& \sum_{i=1}^{n} E(|x_i|^2 + |y_i|^2 -2 \Re \,\, 
\overline{x}_i y_i) \nonumber\\
&=& \tr (A + B) - 2 \Re \ (\tr \,\, M). \label{eq22}
\end{eqnarray}
This is the problem of finding
\begin{equation}
\max \left \{|\tr M| : C = \left[\begin{array}{cc} A & M \\ M^* & B
\end{array}
\right] \geq 0 \right\}. \label{eq23}
\end{equation}
(As we vary $x$ and $y$ over all vectors with covariance matrices
$A$ and $B,$ the covariance matrix of $(x,y)$ varies over all psd
matrices of the form in \eqref{eq21}.) By Theorem \ref{thm2}(iii)
the value of the maximum in \eqref{eq23} is $F(A,B).$ So
\begin{eqnarray*}
\min \ E \|x-y\|^2 &=& \tr (A+B) - 2 \tr (A^{1/2} BA^{1/2})^{1/2}\\
&=& d^2 (A,B)
\end{eqnarray*}
where $d(A,B)$ is as defined in \eqref{eq1}.\\

Let $x$ be a vector with mean $0$ and covariance matrix $A.$ Then for any $T 
\in \mathbb{M}(n)$ we have 
\begin{eqnarray*}
E (\langle x, Tx \rangle) &=& \ E \left( \sum_{i,j} \,\, t_{ij} \,\, 
\overline{x_i} 
\,\, x_j \right) = \sum_{i,j} \,\, t_{ij} \,\, E (\overline{x_i} \,\, x_j ) \\
&=& \sum_{i,j} \,\, t_{ij} \,\, a_{ij} = \tr \,\, TA.
\end{eqnarray*}
Hence,
\begin{eqnarray*}
 E\|x- Tx\|^2 &=& E (\|x\|^2 + \| Tx\|^2 \,\,- \,\, 2 \Re \,\, \langle x,Tx 
\rangle)\\
&=& \tr A + \tr \,\, T^* TA \,\, - 2 \Re \,\, \tr \,\, TA\\
&=& \tr \,\, A \,\, + \tr \,\, TAT^* - 2 \Re \,\, \tr \,\, A^{1/2} TA^{1/2}.
\end{eqnarray*}
If we choose $T = A^{-1} \#B,$ then from \eqref{eq8} we see that $\tr \,\, 
A^{1/2} \,\, TA^{1/2} = \tr \,\, (A^{1/2} BA^{1/2})^{1/2},$ and from 
\eqref{eq9} that $\tr \,\, TAT = \tr \,\, B.$ Thus, for this choice of $T,$ we 
have
\begin{eqnarray*}
 E \|x - \,\, Tx\|^2 &=& \tr \,\, (A+ B) - 2 \, \tr \,\, (A^{1/2} 
BA^{1/2})^{1/2}\\
&=& d^2 (A,B). 
\end{eqnarray*}
Thus the problem
$$\min \,\,  E \|x-y\|^2$$
where $x,y$ are vectors with mean zero and covariance matrices $A$ and $B,$ 
respectively, has as its solution the pairs $(x,y),$ where $x$ is any vector 
and $y=Tx,$ with $T=A^{-1} \# B.$ The matrix $T$ is called the {\it optimal 
transport plan}, or the {\it optimal transport map,} from $A$ to $B.$\\
Let $x$ be a vector with covariance matrix $A,$ and let $y = Tx.$ Then
\begin{eqnarray*}
 E (\overline{y}_i y_j) &=& E \,\, \sum_{k,l} \,\, t_{ik} \,\, t_{kl} 
\,\, \overline{x}_k\,\, x_l \\
&=& \sum_{k,l} \,\, t_{ik} \,\, t_{kl} \,\, a_{kl} \,\, = \,\, (TAT)_{ij}. 
\end{eqnarray*}
If $T$ is the optimal transport map from $A$ to $B,$ then $TAT=B.$ This shows 
that the covariance matrix of the vector $y$ is $B.$\\

The results in this section were proved by Olkin and Pukelsheim \cite{op} and by
Dowson and Landau \cite{dl}. The authoritative reference for optimal transport theory is \cite{v}.
An interesting article explaining connections between optimal transport and Riemannian geometry is \cite{ba}.

\section{Riemannian geometry}

The Bures-Wasserstein distance corresponds to a Riemannian metric, and that is
explained now.\\

From now on we consider positive definite (i.e., nonsingular psd) matrices.
We continue to use the notation $\mathbb {P}(n)$ for the set of all such
matrices. This is an open subset of the real vector space $\mathbb{H}(n).$ Let
$GL(n)$ be the set of all nonsingular matrices. This is an open subset of
$\mathbb{M}(n).$ Both $GL(n)$ and $\mathbb{P}(n)$ are viewed here as
differentiable manifolds. \\

Let $ \pi : GL(n) \rightarrow \mathbb{P}(n) $ be the map defined as $\pi (M) =
MM^*. $ This is a differentiable map, and its derivative $D \pi (M)$ at any
point $M$ is a linear map from $\mathbb{M}(n)$ to $\mathbb{H}(n).$ The action of
this map is
\begin{equation}
 D \pi (M)(Z) = ZM^* + MZ^*, \,\,\,\,Z \in \mathbb{M}(n). \label{eq24}
\end{equation}
The kernel of this map is \\
\begin{eqnarray}
\ker D \pi(M) &=& \left\{ Z : ZM^* + MZ^* = 0\right\}  \nonumber \\
 &=& \left\{ Z : ZM^* \,\,\,\mbox{is skew-Hermitian}\right\}\nonumber  \\
 &=& \left\{ Z = KM^{*-1} : K \,\,\, \mbox{skew-Hermitian} \right\}.
\label{eq25}
\end{eqnarray}
The orthogonal complement of this space with respect to the Frobenius inner
product can be readily computed. A matrix $X$ is in this orthogonal
complement, if and only if we have for all skew-Hermitian matrices $K$
$$ 0 = \langle X, KM^{*-1} \rangle = \re\tr X^*KM^{*-1} = \re\tr M^{*-1} X^* K. $$
This happens if and only if $M^{*-1} X^*$ is Hermitian; i.e. $XM^{-1}$ is
Hermitian. Thus
\begin{equation}
 (\ker D \pi(M))^\perp = \left\{X = HM :H\in \mathbb{H}(n) \right\}.\label{eq26}
\end{equation}
So, we have a direct sum decomposition of the tangent space $T_M GL(n) =
\mathbb{M}(n) $ as
\begin{eqnarray}
 T_M \,\, GL(n) &=& \ker D \pi (M) \oplus (\ker D \pi (M))^\perp \nonumber \\
&=& \mathcal{V}_M \oplus \mathcal{H}_M. \label{eq27}
\end{eqnarray}
The spaces $\mathcal{V}_M$ and $\mathcal{H}_M,$ given by \eqref{eq25} and
\eqref{eq26} are, respectively, called the $\it vertical \,\,space$ and the
$\it horizontal\,\, space $ at $M$ (for the map $\pi$).\\

At this stage we recall two theorems from Riemannian geometry. Let
$(\mathcal{M},g)$ and $(\mathcal{N},h)$ be Riemannian manifolds with
Riemannian metrics $g$ and $h.$ A
differentiable map $\pi : \mathcal{M} \rightarrow \mathcal{N} $ is said to
be a {\it smooth submersion } if its differential $D \pi(m) : T_m
\mathcal{M} \rightarrow T_{\pi(m)} \mathcal{N}$ is surjective at every
point $m.$ Let  $T_m \mathcal{M} = \mathcal{V}_m \oplus
\mathcal{H}_m $ be a decomposition of $T_m \mathcal{M}$ into vertical and
horizontal spaces. Then $\pi$ is called a $\it Riemannian \,\,\,submersion$
if it is a smooth submersion and the map $D \pi(m): \mathcal{H}_m \rightarrow
T_{\pi(m)} \mathcal{N}$ is isometric for all $m.$ \\

\begin{thm}\label{thm3}
Let $(\mathcal{M}, g)$ be a Riemannian manifold.
Let $G$ be a compact Lie group of isometries of $(\mathcal{M}, g)$ acting
freely on $\mathcal{M}.$ Let $\mathcal{N} = \mathcal{M}/G $ and let $\pi :
\mathcal{M} \rightarrow \mathcal{N}$ be the quotient map. Then there exists a
unique Riemannian metric $h$ on $\mathcal{N}$ for which $\pi :
(\mathcal{M},g) \rightarrow (\mathcal{N}, h)$ is a Riemannian submersion.
\end{thm}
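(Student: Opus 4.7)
The plan is to construct $h$ explicitly by declaring the restriction $D\pi(m)|_{\mathcal{H}_m}$ to be an isometry, then to verify that this definition is independent of the point chosen in each fibre (this is where the isometric action enters), and finally to check smoothness and uniqueness. Since $G$ is compact the free action is automatically proper, so the quotient manifold theorem produces a unique smooth structure on $\mathcal{N}$ for which $\pi$ is a smooth submersion. At every $m\in\mathcal{M}$ the vertical space $\mathcal{V}_m=\ker D\pi(m)$ is the tangent space to the orbit $G\cdot m$, and non-degeneracy of $g$ yields the orthogonal decomposition $T_m\mathcal{M}=\mathcal{V}_m\oplus\mathcal{H}_m$ with $\mathcal{H}_m:=\mathcal{V}_m^{\perp}$; consequently $D\pi(m)|_{\mathcal{H}_m}:\mathcal{H}_m\to T_{\pi(m)}\mathcal{N}$ is a linear bijection.

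For $p\in\mathcal{N}$ and $u,v\in T_p\mathcal{N}$, I would pick any $m\in\pi^{-1}(p)$, let $\widetilde{u},\widetilde{v}\in\mathcal{H}_m$ be the horizontal lifts with $D\pi(m)\widetilde{u}=u$ and $D\pi(m)\widetilde{v}=v$, and set
\begin{equation*}
h_p(u,v):=g_m(\widetilde{u},\widetilde{v}).
\end{equation*}
The crux is well-definedness along a fibre. If $m'=\gamma\cdot m$ for some $\gamma\in G$, write $L_\gamma$ for the action of $\gamma$ on $\mathcal{M}$. Because $\pi\circ L_\gamma=\pi$ we have $DL_\gamma(m)\mathcal{V}_m=\mathcal{V}_{m'}$, and because $L_\gamma$ is a $g$-isometry it sends orthogonal complements to orthogonal complements, so $DL_\gamma(m)\mathcal{H}_m=\mathcal{H}_{m'}$. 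Differentiating $\pi\circ L_\gamma=\pi$ gives $D\pi(m')\circ DL_\gamma(m)=D\pi(m)$, which forces the horizontal lift of $u$ at $m'$ to be $DL_\gamma(m)\widetilde{u}$. The isometry property of $L_\gamma$ then yields $g_{m'}(DL_\gamma\widetilde{u},DL_\gamma\widetilde{v})=g_m(\widetilde{u},\widetilde{v})$, so $h_p$ does not depend on $m$.

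Smoothness of $h$ I would obtain by noting that any smooth submersion admits smooth local sections $\sigma:U\to\mathcal{M}$; composing with such a section expresses $h(X,Y)$ on $U$ as $g_{\sigma(\cdot)}(\widetilde{X}\circ\sigma,\widetilde{Y}\circ\sigma)$, where the horizontal lifts of smooth vector fields are smooth because $\mathcal{H}$ is a smooth distribution (it is the orthogonal complement of the smooth vertical distribution). Positive-definiteness of $h_p$ is inherited from the restriction of $g_m$ to $\mathcal{H}_m$ via the linear bijection $D\pi(m)|_{\mathcal{H}_m}$. That $\pi$ is a Riemannian submersion is built into the construction.

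Uniqueness is essentially forced: any Riemannian metric $h'$ on $\mathcal{N}$ making $\pi$ a Riemannian submersion must satisfy $h'_{\pi(m)}(D\pi(m)\widetilde{u},D\pi(m)\widetilde{v})=g_m(\widetilde{u},\widetilde{v})$ for every horizontal pair, and as $\widetilde{u},\widetilde{v}$ range over $\mathcal{H}_m$ the images $D\pi(m)\widetilde{u},D\pi(m)\widetilde{v}$ range over all of $T_{\pi(m)}\mathcal{N}$, so $h'=h$ pointwise. The main obstacle I anticipate is the well-definedness step: one must combine the $G$-invariance of $\pi$ (which gives $DL_\gamma\mathcal{V}_m=\mathcal{V}_{m'}$ purely formally) with the hypothesis that $G$ acts by isometries (which is precisely what promotes this to $DL_\gamma\mathcal{H}_m=\mathcal{H}_{m'}$ and matches the inner products). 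Without the isometric action the horizontal spaces would fail to be $G$-equivariant and no consistent quotient metric would exist.
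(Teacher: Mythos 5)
The paper does not give a proof of Theorem~\ref{thm3}; both it and Theorem~\ref{thm4} are quoted as background facts with a pointer to \cite{ghl}. So there is no in-paper argument to compare against. Your proposed proof is the standard textbook argument and it is correct in all the essential respects: you take $\mathcal{H}_m=\mathcal{V}_m^\perp$, define $h_p$ by pulling back $g_m$ through the linear isomorphism $D\pi(m)|_{\mathcal{H}_m}$, and then verify well-definedness along fibres. The key chain of implications is exactly right: differentiating $\pi\circ L_\gamma=\pi$ gives $DL_\gamma(m)\mathcal{V}_m=\mathcal{V}_{m'}$ (a purely topological fact), the isometry hypothesis upgrades this to $DL_\gamma(m)\mathcal{H}_m=\mathcal{H}_{m'}$, the identity $D\pi(m')\circ DL_\gamma(m)=D\pi(m)$ identifies horizontal lifts at $m$ and $m'$, and then the isometry of $L_\gamma$ matches the inner products. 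Smoothness via local sections and uniqueness via surjectivity of $D\pi(m)|_{\mathcal{H}_m}$ are both handled correctly, and you correctly invoke compactness only to guarantee properness of the free action, hence existence of the smooth quotient. Your closing remark about why the isometry hypothesis is indispensable (without it the horizontal distribution fails to be $G$-equivariant) is a useful observation that explains the structure of the hypotheses.
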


\begin{thm}\label{thm4}
Let $(\mathcal{M},g)$ and $(\mathcal{N}, h)$ be
Riemannian manifolds and $\pi : (\mathcal{M},g) \rightarrow (\mathcal{N},
h)$ a Riemannian submersion. Let $\gamma$ be a geodesic in $(\mathcal{M},g)$ such that $\gamma^\prime(0)$ is horizontal. Then
\begin{itemize}
\item[(i)] $ \gamma^\prime (t) $ is
horizontal for all $t.$
\item[(ii)] $\pi \circ \gamma$ is a geodesic in $(\mathcal{N},h)$ of the same length
as $\gamma.$
\end{itemize}
\end{thm}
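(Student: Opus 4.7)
The plan is to reduce both claims to a uniqueness statement for geodesics in $(\mathcal{M},g)$ by producing an obviously horizontal candidate whose $\pi$-image is a geodesic of $(\mathcal{N},h)$. The basic tool is the \emph{horizontal lift} of a curve: given a smooth $c:[0,T]\to\mathcal{N}$ and a point $m_0\in\pi^{-1}(c(0))$, there is a unique smooth $\tilde c:[0,T]\to\mathcal{M}$ with $\tilde c(0)=m_0$, $\pi\circ\tilde c=c$, and $\tilde c'(t)\in\mathcal{H}_{\tilde c(t)}$ for every $t$. Existence and uniqueness are standard ODE theory on the horizontal distribution, since at each $m$ the map $D\pi(m)|_{\mathcal{H}_m}$ is a linear isomorphism onto $T_{\pi(m)}\mathcal{N}$. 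Because $\pi$ is a Riemannian submersion, this isomorphism is in fact an isometry, so $\|\tilde c'(t)\|_g=\|c'(t)\|_h$ for every $t$, and therefore $L(\tilde c)=L(c)$.

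The heart of the argument is the auxiliary claim that \emph{if $c$ is a geodesic in $(\mathcal{N},h)$, then its horizontal lift $\tilde c$ is a geodesic in $(\mathcal{M},g)$}. I would prove this by the variational characterization of geodesics, working on a subinterval on which $c$ is length-minimizing. The key observation is that $\pi$ is infinitesimally non-expanding: for any $v\in T_m\mathcal{M}$, splitting $v=v_{\mathcal{H}}+v_{\mathcal{V}}$ yields $\|D\pi(m)v\|_h=\|v_{\mathcal{H}}\|_g\le\|v\|_g$. Consequently, for any smooth curve $\sigma$ in $\mathcal{M}$ from $\tilde c(t_0)$ to $\tilde c(t_1)$ on such a short subinterval, $L(\sigma)\ge L(\pi\circ\sigma)\ge L(c|_{[t_0,t_1]})=L(\tilde c|_{[t_0,t_1]})$, so $\tilde c|_{[t_0,t_1]}$ is locally length-minimizing. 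Since $\tilde c$ also has constant speed (because $c$ does and the horizontal lift preserves speed), it is a geodesic on each such subinterval, hence on the whole of $[0,T]$.

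With the lemma in hand, (i) and (ii) drop out together. Given the geodesic $\gamma$ with $\gamma'(0)$ horizontal, let $c$ be the geodesic in $\mathcal{N}$ with $c(0)=\pi(\gamma(0))$ and $c'(0)=D\pi(\gamma(0))\gamma'(0)$, and let $\tilde c$ be the horizontal lift of $c$ starting at $\gamma(0)$. By the lemma, $\tilde c$ is a geodesic in $\mathcal{M}$. By construction $\tilde c'(0)$ is the horizontal lift of $c'(0)$ at $\gamma(0)$, and because $\gamma'(0)$ is itself horizontal with $D\pi(\gamma(0))\gamma'(0)=c'(0)$, this forces $\tilde c'(0)=\gamma'(0)$. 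Uniqueness of geodesics with prescribed initial velocity then gives $\gamma=\tilde c$, which yields (i) (the velocity $\tilde c'(t)$ is horizontal by construction) and (ii) ($\pi\circ\gamma=c$ is a geodesic). Equality of lengths is immediate from $\|\gamma'(t)\|_g=\|\tilde c'(t)\|_g=\|c'(t)\|_h$. The main technical delicacy lies in the auxiliary lemma: the geodesic hypothesis on $c$ only supplies \emph{local} minimization, so the variational comparison must be executed on short intervals and then patched. Once this point is dealt with, the remainder of the proof is short bookkeeping based on the isomorphism and isometry properties of $D\pi|_{\mathcal{H}}$ together with uniqueness for the geodesic ODE.
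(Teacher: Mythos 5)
Your argument is correct, but there is nothing in the paper to compare it against: the authors state Theorem~\ref{thm4} as a standard fact and simply cite \cite{ghl} for the proof. So the fair comparison is with the usual textbook proof, which your argument genuinely differs from.

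The route taken in \cite{ghl} (and in most references, e.g.\ O'Neill's book) is computational: one derives the fundamental formulas of a Riemannian submersion relating the Levi--Civita connections of $\mathcal{M}$ and $\mathcal{N}$ (the O'Neill formulas, via the second fundamental form of the fibers and the integrability tensor of the horizontal distribution), and then reads off directly that a horizontal lift of a geodesic in $\mathcal{N}$ satisfies the geodesic equation in $\mathcal{M}$, and that a geodesic with one horizontal tangent stays horizontal. Your proof replaces this tensor calculus with the variational characterization of geodesics: $\pi$ is $1$-Lipschitz, horizontal lifts preserve length, geodesics in $\mathcal{N}$ locally minimize, hence horizontal lifts of geodesics locally minimize and (having constant speed) are geodesics; uniqueness of the geodesic ODE then collapses $\gamma$ onto the horizontal lift $\tilde c$ and delivers (i) and (ii) at once. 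This is a legitimate and more elementary route in the sense that it avoids computing covariant derivatives and the O'Neill tensors, at the price of invoking the local-minimization property of geodesics (totally normal neighbourhoods) and doing the patching over short subintervals, which you correctly flag as the technical crux. The computational route is ``heavier'' up front but yields more (the curvature comparison formulas), which the variational route does not.

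Two small points worth making explicit if you wrote this up in full. First, when you assert $L(\pi\circ\sigma)\ge L(c|_{[t_0,t_1]})$ for an arbitrary competitor $\sigma$, you are using that $c|_{[t_0,t_1]}$ is minimizing among \emph{all} curves between its endpoints, not merely a critical point of length; this is exactly the statement that one must restrict to $[t_0,t_1]$ inside a totally normal neighbourhood, so it is good that you anticipated it. Second, horizontal lifts of a curve need not exist on the whole parameter interval unless one has a completeness or properness hypothesis (indeed $GL(n)$, the $\mathcal{M}$ in this paper, is not complete); the theorem as stated in the paper also glosses over this, so it is a shared, conventional imprecision rather than a defect specific to your argument. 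With these caveats the proof is sound.
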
 
See \cite{ghl}.\\

Let us return to our setup now. $GL(n)$ is a Riemannian manifold with the
metric
induced by the Frobenius inner product. The group  $U(n)$ is a compact Lie
group of isometries for this metric. The quotient space $GL(n)/U(n)$ is
$\mathbb{P}(n).$ The metric inherited by the quotient space $\mathbb{P}(n)$
is (upto a constant factor) exactly the one given in Theorem \ref{thm1}; i.e.,
\begin{equation}
\min \|A^{1/2} -B^{1/2} U\|_2 = d(A,B). \label{eq28}
\end{equation}
The map $\pi (M) = MM^*$ is a smooth submersion, as is evident from
\eqref{eq26}. By Theorem \ref{thm3} there is a unique Riemannian metric on
$\mathbb{P}(n)$(for each point $A$ of $\mathbb{P}(n)$ an inner
product $ \langle \cdot , \cdot\rangle_A $ on the tangent space $T_A
\mathbb{P}(n) = \mathbb{H}(n)$) for which $\pi$ is a Riemannian submersion.
To find this inner product we proceed as follows. Let $A=MM^*.$ We
want the map $ D \pi(M) : \mathcal{H}_M \rightarrow T_A \mathbb{P}(n)
= \mathbb{H}(n)$ to be an isometry. The inner product between two
elements $HM$ and $KM$ in the horizonal space $\mathcal{H}_M$ is $\langle HM,KM
\rangle = \re\tr\, KMM^*H=\re\tr\, KAH.$ By \eqref{eq24} we have $D\pi (M) (HM) =
HMM^* + MM^*H = HA + AH.$ So for $D\pi (M)$ to be an isometry the inner product
$ \langle \cdot , \cdot\rangle_A $ on $T_A \mathbb{P}(n) = \mathbb{H}(n)$ must
be given by
\begin{equation}
\langle HA + AH, KA + AK \rangle_A = \re\tr\,KAH. \label{eq29}
\end{equation}
Let $Y$ be any element of $\mathbb{H}(n).$ Then there exists a unique $ H \in
\mathbb{H}(n)$ such that \\
\begin{equation}
HA + AH = Y. \label{eq30}
\end{equation}
Indeed, in an orthonormal basis in which $A = \diag (\alpha_1,
\ldots ,\alpha_n)$ the equation \eqref{eq30} is satisfied by the matrix $H$
with entries
\begin{equation}
h_{ij} = \frac{y_{ij}}{\alpha_i + \alpha_j}. \label{eq31}
\end{equation}
Let $Z$ be another element of $\mathbb{H}(n).$ Then the matrix $k_{ij}=z_{ij}
/(\alpha_i + \alpha_j)$  satisfies the
equation $KA+AK=Z.$ So from \eqref{eq29} and \eqref{eq30} we get\\
\begin{equation}
 \langle Y,Z \rangle_A = \sum_{i,j} \alpha_i \frac{\re\overline{y}_{ji}
z_{ji}}{(\alpha_i + \alpha_j)^2}. \label{eq32}
\end{equation}
To sum up, we have proved the following.\\

\begin{thm}\label{thm5}
For each $A \in \mathbb{P}(n)$  let $\langle Y,Z
\rangle_A$ be the inner product on $\mathbb{H}(n)$ given by \eqref{eq32}. This
gives a Riemannian metric on the manifold $\mathbb{P}(n),$ the distance
function corresponding to which coincides with \eqref{eq28}.
\end{thm}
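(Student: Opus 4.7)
The statement contains two assertions: that \eqref{eq32} defines a Riemannian metric on $\mathbb{P}(n)$, and that the induced distance agrees with \eqref{eq28}. The first follows from the construction leading up to the theorem: the Frobenius metric on $GL(n)$ is invariant under the free right action of the compact group $U(n)$ (since $\|MU\|_2=\|M\|_2$), and the quotient is $\mathbb{P}(n)$ via $\pi(M)=MM^*$. By Theorem~\ref{thm3} this yields a unique Riemannian metric on $\mathbb{P}(n)$ for which $\pi$ is a Riemannian submersion, and the calculation in \eqref{eq24}--\eqref{eq32} identifies that metric in coordinates, so positive definiteness and smooth dependence on $A$ come for free.

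The substantive task is to show that the geodesic distance $d_h$ of this metric equals $d(A,B)$. My plan is to apply Theorem~\ref{thm4} to an explicit horizontal geodesic in $GL(n)$. Given $A,B\in\mathbb{P}(n)$, let $V$ be the unitary polar factor of $X=B^{1/2}A^{1/2}$ from Theorem~\ref{thm1}, so that $B^{1/2}A^{1/2}=V(A^{1/2}BA^{1/2})^{1/2}$. I would then consider the straight line
\begin{equation*}
\gamma(t)=(1-t)A^{1/2}+tB^{1/2}V,\qquad t\in[0,1],
\end{equation*}
which is a geodesic in $\mathbb{M}(n)$ for the flat Frobenius metric. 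Taking adjoints in the polar identity gives $B^{1/2}V=A^{-1/2}(A^{1/2}BA^{1/2})^{1/2}=HA^{1/2}$ with $H:=A^{-1/2}(A^{1/2}BA^{1/2})^{1/2}A^{-1/2}$, so that $\gamma(t)=\bigl((1-t)I+tH\bigr)A^{1/2}$. Because $H>0$, the factor $(1-t)I+tH$ is positive definite (hence invertible) for every $t\in[0,1]$, so $\gamma$ stays inside $GL(n)$ and is therefore a geodesic there.

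Next I would verify that $\gamma'(0)=(H-I)A^{1/2}$ is horizontal at $A^{1/2}$: by \eqref{eq26}, the horizontal vectors there are exactly those of the form $\widetilde H A^{1/2}$ with $\widetilde H$ Hermitian, and $H-I$ is Hermitian. Theorem~\ref{thm4}(ii) then gives that $\pi\circ\gamma$ is a geodesic in $\mathbb{P}(n)$ from $A$ to $B$ whose length equals $\|\gamma'(0)\|_2=\|A^{1/2}-B^{1/2}V\|_2=d(A,B)$ by Theorem~\ref{thm1}. This yields one half of the equality, $d_h(A,B)\le d(A,B)$.

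For the reverse inequality I would invoke the standard fact that, for a Riemannian submersion, every curve in the base lifts horizontally to a curve of the same length upstairs, so that $d_h(A,B)=\inf\{d_g(A^{1/2},N):N\in\mathcal{F}(B)\}$ with $d_g$ the Frobenius geodesic distance on $GL(n)$. Since $GL(n)$ inherits its metric from the flat ambient space $\mathbb{M}(n)$, we have $d_g(P,Q)\ge\|P-Q\|_2$, and so $d_h(A,B)\ge\inf\{\|A^{1/2}-N\|_2:N\in\mathcal{F}(B)\}=d(A,B)$ by Theorem~\ref{thm1}. The main subtlety I anticipate is the verification that $\gamma$ stays in $GL(n)$ and that $\gamma'(0)$ is horizontal, both of which rely on identifying $B^{1/2}V$ with the positive definite matrix $HA^{1/2}$ via the structure of the polar factor $V$.
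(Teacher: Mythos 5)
Your proposal is correct and follows essentially the same route as the paper: the metric comes from the Riemannian submersion $\pi(M)=MM^*$ via Theorem~\ref{thm3} and the computation \eqref{eq24}--\eqref{eq32}; the upper bound $d_h\le d$ comes from the horizontal straight line $(1-t)A^{1/2}+tB^{1/2}V=((1-t)I+t(A^{-1}\#B))A^{1/2}$ projected via Theorem~\ref{thm4}; and the lower bound comes from horizontal lifts together with the flatness of the Frobenius metric. The only cosmetic difference is that you obtain $B^{1/2}V=HA^{1/2}$ by taking adjoints in the polar identity, whereas the paper derives the same identity (its \eqref{eq3p}) by a short chain of geometric-mean manipulations, and the paper constructs the horizontal lift explicitly by solving a Sylvester-equation ODE rather than citing it as a standard fact.
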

\vskip0.2in
\begin{center}
\begin{figure}[ht]
\includegraphics{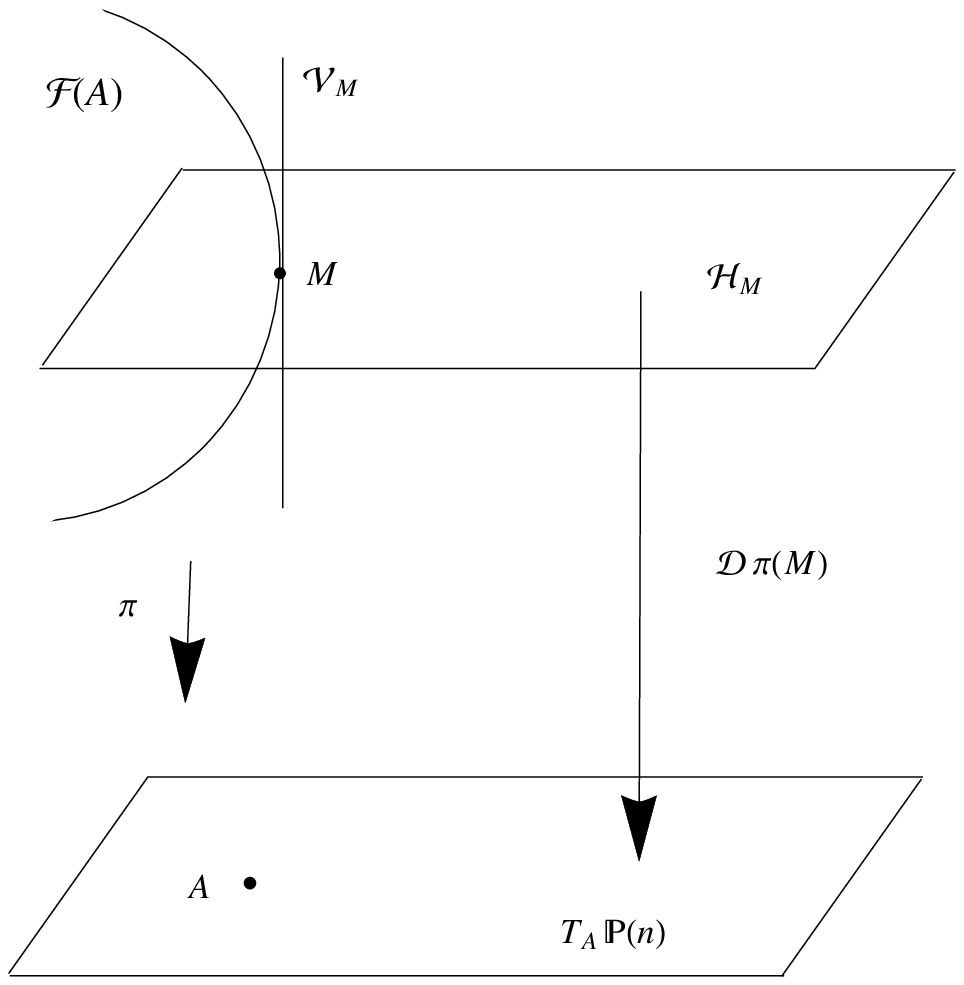}
\caption{}
\end{figure}
\end{center}

Figure 1 is a schematic representation of the Riemannian submersion in Theorem \ref{thm5}.
\vskip.2in

Next, we obtain a formula for the geodesic joining $A$ and $B$ in $\mathbb{P}(n).$ Let $U$ be the unitary polar factor of $B^{1/2}A^{1/2}$; i.e.,
\begin{equation}
B^{1/2}A^{1/2}=U|B^{1/2}A^{1/2}|=U(A^{1/2}BA^{1/2})^{1/2},\label{eq1p}
\end{equation}
and let
\begin{equation}
Z(t)=(1-t)A^{1/2}+tB^{1/2}U, \ \ \ 0\le t\le 1.\label{eq2p}
\end{equation}
From \eqref{eq1p} we have
\begin{eqnarray}
U & = & B^{1/2}A^{1/2}(A^{1/2}BA^{1/2})^{-1/2}\nonumber\\
 & = & B^{1/2}A^{1/2}(A^{1/2}BA^{1/2})^{-1/2}A^{-1/2}A^{1/2}\nonumber\\
 & = & B^{1/2}(AB)^{-1/2}A^{1/2}\nonumber\\
 & = & B^{-1/2}B(B^{-1}A^{-1})^{1/2}A^{1/2}\nonumber\\
 & = & B^{-1/2}(B\#A^{-1})A^{1/2}.\label{eq3p}
\end{eqnarray}
So, the equation \eqref{eq2p} can be written as
\begin{equation}
Z(t)=(1-t)A^{1/2}+t(A^{-1}\# B)A^{1/2},\ \ \ 0\le t\le 1.\label{eq4p}
\end{equation}
We have
\begin{equation}
Z(0)=A^{1/2},\ Z(1)=(A^{-1}\# B)A^{1/2}=B^{1/2}U,\label{eq5p}
\end{equation}
and
\begin{equation}
Z^\prime (t)=B^{1/2}U-A^{1/2}=(A^{-1}\# B-I)A^{1/2},\ \ 0\le t\le 1.\label{eq6p}
\end{equation}
Note that
\begin{equation*}
Z(t)=((1-t)I+t(A^{-1}\# B))A^{1/2},
\end{equation*}
being a product of two positive definite matrices is in $GL(n).$ Being a straight line segment, it is a geodesic. From
\eqref{eq26} and \eqref{eq6p} we see that $Z^\prime (0)$ is in the horizontal space $H_{A^{1/2}}.$ So, by Theorem \ref{thm4}
$\gamma(t)=\pi(Z(t))$ is a geodesic in the space $\mathbb{P}(n)$ with respect to the Riemannian metric \eqref{eq32}.
From
\eqref{eq5p} we see that
\begin{equation*}
\gamma(0)=\pi(Z(0))=A,\ \textrm{ and }
\end{equation*}
\begin{equation*}
\gamma(1)=\pi(Z(1))=Z(1)Z(1)^*=B.
\end{equation*}
Thus $\gamma(t)$ is a geodesic joining $A$ and $B.$
An explicit expression for $\gamma(t)$ can be obtained by using \eqref{eq2p} and \eqref{eq3p}. We have
\begin{eqnarray}
\gamma(t) & = & Z(t)Z(t)^*\nonumber\\
 & = & (1-t)^2A+t^2B+t(1-t)\bigl[A^{1/2}U^*B^{1/2}+B^{1/2}UA^{1/2}\bigr]\nonumber\\
 & = & (1-t)^2A+t^2B+t(1-t)\bigl[A(A^{-1}\# B)+(A^{-1}\# B)A\bigr]\nonumber\\
 & = & (1-t)^2A+t^2B+t(1-t)\bigl[(AB)^{1/2}+(BA)^{1/2}\bigr].\label{eq7p}
\end{eqnarray}
Theorem \ref{thm4} tells us that the length $L_\gamma$ of the geodesic in $\mathbb{P}(n)$ is equal to the
length $L_Z$ in $GL(n).$ The latter is the length of the straight line segment joining $A^{1/2}$
 and $B^{1/2}U.$ So, from Theorem \ref{thm1} we have
\begin{equation*}
L_\gamma=\|A^{1/2}-B^{1/2}U\|_2 = \, d(A,B).
\end{equation*}

We started with the distance $d(A,B)$ on $\mathbb{P}(n)$ and used Theorems \ref{thm3}
and \ref{thm4} to show that this distance corresponds to a Riemannian metric given by
\eqref{eq32}. If, to begin with, we are provided with the metric \eqref{eq32}
at each point $A,$ then starting from it we can obtain the distance function
$d(A,B).$ \\

At the beginning of this section we introduced the vertical and horizontal
spaces at a point $M$ of $GL (n).$ A curve $\widetilde{\gamma}$ in $GL
(n)$ is called ${\it horizontal}$ if for each $t$ the tangent vector
$\widetilde{\gamma}^{\prime} (t)$ is in the horizontal space
$\mathcal{H}_{\widetilde\gamma (t)}.$ From \eqref{eq26}
we see that $\widetilde{\gamma}$ is horizontal if and only if
there
exists a Hermitian matrix $H (t)$ such that \\
\begin{equation}
\widetilde{\gamma}^{\prime}(t) = H(t) \widetilde {\gamma} (t),\,\,\,\,\ 0
\leq t \leq 1. \label{eq37}
\end{equation}
Let
\begin{equation}
\gamma(t) = \widetilde {\gamma}(t)\widetilde {\gamma}
(t)^*.\label{eq38}
\end{equation}
Then $\gamma$ is a curve in $\mathbb{P}(n).$ Differentiating the relation
\eqref{eq38} and then using \eqref{eq37} we see that
\begin{equation}
\gamma^{\prime}(t) = \gamma(t) H(t) + H(t)\gamma(t). \label{eq39}
\end{equation}

If $\gamma$ is any curve in $\mathbb{P}(n),$ then a curve $\widetilde{\gamma}$
in $GL (n)$ is said to be a ${\it horizontal\,\, lift}$ of $\gamma$ if
$\widetilde{\gamma}$ is horizontal and the relation \eqref{eq38} is satisfied.
Every curve $\gamma$ in $\mathbb{P}(n)$ has a unique horizontal lift
$\widetilde{\gamma}$ that satisfies the condition $\widetilde{\gamma}(0)
\,\,\,\widetilde{\gamma}(0)^* = \gamma (0) .$ This can be seen as follows. Given
$\gamma(t)$ let $H(t)$ be the unique solution of the Sylvester equation
\eqref{eq39}. From the smoothness of  $\gamma$ it follows that $H(t)$  is
continous. Let $M$ be a point of $GL (n)$ such that $MM^* = \gamma (0).$ The
initial value problem $ X^{\prime}(t) = H(t) X(t), \,\,\,X(0) = M,$  has a
unique solution. Call this $\widetilde{\gamma}(t).$ We have seen above that
this curve is a horizontal lift of $\gamma (t).$

The length of the curve $\gamma$ is defined as \\
$$ L_{\gamma} = \intop^1_0 \langle \,\,\,\gamma^{\prime} (t),
\,\,\,\gamma^{\prime} (t) \,\,\, \rangle ^{1/2}_{\gamma(t)} \,\,dt. $$
If the inner product in the integrand is defined by \eqref{eq29} and $H(t)$ by
\eqref{eq39}, then this gives\\
$$ L_{\gamma} = \intop^1_0 (\tr \,\, H(t)\,\, \gamma(t) \,\, H(t))^{1/2}\,\,dt.
$$
Using \eqref{eq37} and \eqref{eq38} we obtain from this \\
$$ L_{\gamma} = \intop^1_0 \langle \,\,\,\widetilde{\gamma}^{\prime} (t),
\,\,\,\widetilde{\gamma}^{\prime} (t) \,\,\, \rangle ^{1/2} \,\,dt =
\intop^1_0 \|\widetilde{\gamma}^{\prime}(t)\|_2 \,\, dt. $$
This is the length of the curve $\widetilde{\gamma}$ with respect to the
Euclidean distance, and cannot be smaller than the straight line distance. So
$$ L_{\gamma} \geq\,\, \|\widetilde{\gamma} (0)\,\, -
\,\, \widetilde{\gamma} (1)\,\, \|_2.$$
If  $\gamma (0) = A$ and $\gamma (1) = B,$ then $\widetilde{\gamma}(0)$
and $\widetilde{\gamma}(1)$ are points in $\mathcal{F}(A)$ and
$\mathcal{F}(B),$ respectively. So, by Theorem \ref{thm1}
$$ L_{\gamma} \geq\,\, d(A,B).$$
Earlier we have seen a curve for which the two sides of this inequality are
equal. Thus the metric \eqref{eq32} leads to the distance function $d(A,B)$ by a 
direct computation.

The material in this section is based on \cite{bz, j, t, u}.
Takatsu \cite{t} also discusses the metric geometry of the spaces of psd matrices of rank $k,$
$1\le k\le n.$
A very interesting research paper by K. Modin \cite{mo} dicusses the connections
between optimal transport, geometry and matrix decompositions.

\section{The Wasserstein mean}

There is another standard metric on $\mathbb{P}(n)$ which has been extensively studied. In this the inner product on the tangent space $T_A\mathbb{P}(n)=\mathbb{H}(n)$ is given by
\begin{equation}
\langle Y, Z\rangle_A=\tr\, A^{-1}YA^{-1}Z,\label{eq40}
\end{equation}
and the associated distance function is
\begin{equation}
\delta(A,B)=\|\log\, A^{-1/2}BA^{-1/2}\|_2.\label{eq41}
\end{equation}
Any two points $A, B$ of $\mathbb{P}(n)$ can be joined by a unique geodesic with respect to this metric, and a natural parametrisation for this geodesic is
\begin{equation}
A\#_tB=A^{1/2}(A^{-1/2}BA^{-1/2})^tA^{1/2},\ \ \ \ 0\le t\le 1.\label{eq42}
\end{equation}
The geometric mean $A\# B$ defined in \eqref{eq8} is evidently the midpoint of this geodesic; i.e.,
\begin{equation*}
A\# B=A\#_{1/2}B.
\end{equation*}
The metric \eqref{eq41} has lots of isometries. We have
\begin{equation}
\delta(XAX^*,XBX^*)=\delta(A,B)\textrm{ for all }X\in GL(n),\label{eq43}
\end{equation}
and
\begin{equation}
\delta(A^{-1},B^{-1})=\delta(A,B)\textrm{ for all }A,B.\label{eq44}
\end{equation}
This bestows upon the geometric mean $A\# B$ several interesting and useful properties, and the object is much used in operator theory, quantum mechanics, electrical networks, elasticity, image processing, etc.
The collection \cite{nb} has several articles on the theory,
computation, and applications of this mean and its multivariable version.

It is natural to ask what properties the $``$mean'' with respect to the distance \eqref{eq1} might have. Let us adopt the notation $A\diamond_tB$ for the
geodesic $\gamma(t)$ given in \eqref{eq7p}. The midpoint of this is
\begin{equation}
A\diamond B=\frac{1}{4}(A+B+(AB)^{1/2}+(BA)^{1/2}).\label{eq45}
\end{equation}
We call this the {\it Wasserstein mean} of $A$ and $B.$ The relations
\begin{equation}
(AB)^{1/2}=A(A^{-1}\# B)=A^{1/2}(A^{1/2}BA^{1/2})^{1/2}A^{-1/2},\label{eq46}
\end{equation}
will be used in the following discussion.

For the Bures-Wasserstein distance \eqref{eq1} only a very restrictive version of \eqref{eq43} is true: we have $d(UAU^*,UBU^*)=d(A,B)$ provided $U$ is unitary. The analogue of \eqref{eq44} is not valid for $d.$ So the Wasserstein mean does not have many of the interesting properties of the mean $A\# B.$ The following theorem is, therefore, surprising. Recall the operator version of the harmonic-geometric-arithmetic mean inequality. This says
\begin{equation}
\bigl(\frac{A^{-1}+B^{-1}}{2}\bigr)^{-1}\le A\# B\le \frac{A+B}{2}.\label{eq47}
\end{equation}
The second inequality in \eqref{eq47} can be extended as
\begin{equation}
A\#_tB\le (1-t)A+tB,\ \ \ 0\le t\le 1.\label{eq48}
\end{equation}
This has an analogue for the Wasserstein mean:

\begin{thm}\label{thm6}
For all positive definite matrices $A$ and $B$ we have
\begin{equation}
A\diamond_tB\le (1-t)A+tB,\ \ \ 0\le t\le 1.\label{eq49}
\end{equation}
\end{thm}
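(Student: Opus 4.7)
The plan is to exploit the explicit horizontal lift already constructed in the previous section: by \eqref{eq38} we have $A\diamond_tB = \gamma(t) = Z(t)Z(t)^*$, where $Z(t) = (1-t)A^{1/2}+tB^{1/2}U$ is the straight line segment from $A^{1/2}$ to $B^{1/2}U$ in $GL(n)$, and $U$ is the unitary polar factor of $B^{1/2}A^{1/2}$. The job is then to compare $Z(t)Z(t)^*$ with $(1-t)A+tB$.

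The key step is a matrix version of the elementary scalar identity $(1-t)a^2+tb^2-[(1-t)a+tb]^2 = t(1-t)(a-b)^2$. Concretely, for any $X,Y\in\mathbb{M}(n)$ and any $t\in[0,1]$,
\begin{equation*}
(1-t)XX^* + tYY^* - \bigl[(1-t)X+tY\bigr]\bigl[(1-t)X+tY\bigr]^* = t(1-t)(X-Y)(X-Y)^*.
\end{equation*}
This is verified by a one-line expansion of the left-hand side; the cross terms combine into $-t(1-t)(XY^*+YX^*)$ while the diagonal terms collapse to $t(1-t)(XX^*+YY^*)$.

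I would then apply this identity with $X=A^{1/2}$ and $Y=B^{1/2}U$. Since $XX^*=A$, $YY^*=B^{1/2}UU^*B^{1/2}=B$, and $(1-t)X+tY=Z(t)$, the left-hand side becomes exactly $(1-t)A+tB - A\diamond_tB$, while the right-hand side is manifestly positive semidefinite. This gives \eqref{eq49}.

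There is really no serious obstacle; the preparatory work of the previous section (producing the straight-line horizontal lift $Z(t)$) is what makes the inequality collapse to a squared-norm computation. As a bonus, the argument identifies the equality case: for $t\in(0,1)$, equality in \eqref{eq49} forces $A^{1/2}=B^{1/2}U$, hence $A=B$. It is worth noting that this mirrors, but is strictly simpler than, the analogous arithmetic–geodesic inequality \eqref{eq48} for the affine-invariant mean $A\#_tB$, whose proof typically requires an operator-monotonicity or Jensen-type argument.
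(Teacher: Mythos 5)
Your proof is correct, and it takes a slightly different route from the paper's. The paper rewrites $A\diamond_tB$ (via \eqref{eq7p} and \eqref{eq46}) as
\begin{equation*}
A\diamond_tB \;=\; A^{-1/2}\bigl[(1-t)A + t(A^{1/2}BA^{1/2})^{1/2}\bigr]^2 A^{-1/2},
\end{equation*}
and then invokes the operator convexity of $X\mapsto X^2$ before undoing the conjugation. You instead work directly with the horizontal lift $Z(t)=(1-t)A^{1/2}+tB^{1/2}U$ and the completing-the-square identity $(1-t)XX^*+tYY^*-\bigl[(1-t)X+tY\bigr]\bigl[(1-t)X+tY\bigr]^*=t(1-t)(X-Y)(X-Y)^*$, whose right-hand side is manifestly psd. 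The two arguments rest on the same algebra --- specializing your identity to Hermitian $X,Y$ is exactly the standard direct proof that squaring is operator convex --- but yours is applied at the level of $GL(n)$ and thereby skips the $A^{-1/2}(\cdot)A^{-1/2}$ conjugation and the appeal to a named convexity result. It also hands you the explicit psd error term $t(1-t)(A^{1/2}-B^{1/2}U)(A^{1/2}-B^{1/2}U)^*$ and hence the equality case $A=B$, which the paper's argument does not make explicit. One small attribution point: the equality $A\diamond_tB=Z(t)Z(t)^*$ is not really \eqref{eq38} (the generic definition of a projected curve) but the content of the discussion surrounding \eqref{eq2p}--\eqref{eq7p}, where the paper identifies $\pi(Z(t))$ as the geodesic from $A$ to $B$.
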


\begin{proof}
Using the equations \eqref{eq7p} and \eqref{eq46} we have
\begin{eqnarray*}
A\diamond_t B & = & \gamma(t)\\
& = & (1-t)^2A+t^2B\\
&  & \ \ +t(1-t)\left[  A^{1/2}(A^{1/2}BA^{1/2})^{1/2}A^{-1/2}+A^{-1/2}(A^{1/2}BA^{1/2})^{1/2}A^{1/2}\right]\\
 & = & A^{-1/2}\bigl[(1-t)^2A^2+t^2A^{1/2}BA^{1/2}\\
 &  & \ \ t(1-t)\{A(A^{1/2}BA^{1/2})^{1/2}+(A^{1/2}BA^{1/2})^{1/2}A\}\bigr]A^{-1/2}\\
 & = & A^{-1/2}\bigl[ (1-t)A+t(A^{1/2}BA^{1/2})^{1/2}\bigr]^2A^{-1/2}.
\end{eqnarray*}
The map $f(X)=X^2$ is matrix convex; i.e., for all Hermitian matrices $X$ and $Y$ we have
\begin{equation*}
\left[ (1-t)X+tY\right]^2\le (1-t)X^2+tY^2,\ \ \ 0\le t\le 1.
\end{equation*}
Hence,
\begin{eqnarray*}
A\diamond_tB & \le & A^{-1/2}\left[ (1-t)A^2+tA^{1/2}BA^{1/2}\right] A^{-1/2}\\
 & = & (1-t)A+tB.
\end{eqnarray*}
This proves the inequality \eqref{eq49}.
\end{proof}

Another instructive proof of Theorem \ref{thm6} goes as follows. Using the inequality
\begin{equation*}
0\le A^{-1/2}(A-(A^{1/2}BA^{1/2})^{1/2})^2 A^{-1/2},
\end{equation*}
and \eqref{eq46} we obtain
\begin{equation}
(AB)^{1/2}+(BA)^{1/2}\le A+B.\label{eq50a}
\end{equation}
So, from \eqref{eq45} we have
\begin{equation}
A\diamond B\le \frac{1}{2}(A+B).\label{eq50b}
\end{equation}
Since $A\diamond_t B=\gamma(t)$ is the natural parametrisation of the geodesic joining $A$ and $B,$ we have
\begin{equation*}
(A\diamond_sB)\diamond_u(A\diamond_tB)=A\diamond_v B,
\end{equation*}
where $v=(1-u)s+ut$ for all $s,t,u$ in $[0,1].$ Using this we can obtain from \eqref{eq50b} the inequality \eqref{eq49} for all dyadic rational values of $t.$ By continuity it holds for all $0\le t\le 1.$

Theorem \ref{thm6} may lead us to expect that the inequality
\begin{equation}
\bigl(\frac{A^{-1}+B^{-1}}{2}\bigr)^{-1}\le A\diamond B,\label{eq52b}
\end{equation}
 might also be true. However, this is not always the case.

If $A$ and $B$ are two positive definite matrices such that $A\le B,$ then it follows from Theorem \ref{thm6} that $A\diamond B\le B.$ 
However, it is not necessary that $A\le A\diamond B.$ If we choose
\begin{equation*}
A=\begin{bmatrix}1 & 1\\
1 & 2\end{bmatrix},\ \ B=\begin{bmatrix}3 & 1\\
1 & 2\end{bmatrix},
\end{equation*}
then
\begin{equation*}
A\diamond B\approx\begin{bmatrix}1.8495 & 1.0449\\
1.0449 & 1.9857\end{bmatrix},
\end{equation*}
and $A\nleq A\diamond B.$

This example also shows that $A\diamond B$ is not monotone with respect to the partial order $\le$; i.e. if $A\le A^\prime,$ then it is not necessary that $A\diamond B\le A^\prime\diamond B.$

\section{The Wasserstein barycentre}
Let $A_1,\ldots, A_m$ be elements of $\mathbb{P}(n)$ and let $w =
(w_1,\ldots,w_m)$ be a weight vector ; i.e., $w_j > 0$ and $\Sigma \,\, w_j =
1.$ Consider the minimisation problem
\begin{equation}
 \underset{X>0}{\min}\,\,\, \sum\limits^{m}_{j=1} w_j d^2(X,A_j).
\label{eq53}
\end{equation}

This problem was first considered by Knott and Smith \cite{ks} as a multivariable
generalisation of the work of Olkin and Pukelsheim discussed in Section 3
above. Agueh and Carlier \cite{ac} studied the general problem of determining
the barycentre of several probability measures on $\mathbb{R}^n.$ The special
case of Gaussian measures is the problem \eqref{eq53}. The general problem
has been studied as a part of the the {\it multimarginal transport problem} or the {\it
$m$-coupling problem.} \\

Theorem 6.1 of \cite{ac} says that the problem \eqref{eq53} has a unique
solution. The proof of uniqueness in \cite{ac}, that draws on the earlier 
discussion of
the general case, relies on tools from nonsmooth analysis, convex duality and
the theory of optimal transport. In the spirit of this paper we now provide
another proof using simple ideas from matrix analysis. \\

The minimiser in \eqref{eq53} is called the {\it Wasserstein barycentre} of
$A_1 \ldots, A_m$ with weights $w_1,\ldots,w_m.$ This is the positive definite
matrix
\begin{equation}
\Omega (w; A_1, \ldots, A_m) = \underset{X>0}{\argmin}\,\,\,
\sum\limits^{m}_{j=1} w_j d^2(X,A_j). \label{eq54}
\end{equation}

Using the definition \eqref{eq1} we see that the objective function in
\eqref{eq54} is $f(X)$, where
\begin{equation}
 f(X) = \sum\limits^{m}_{j=1} \,\, w_j\,\, \tr A_j + \sum\limits^{m}_{j=1} \,\,
w_j \,\,\tr (X-2(A^{1/2}_j X
A^{1/2}_j)^{1/2}).\label{eq55}
\end{equation}
This is a differentiable function on the convex cone $\mathbb{P}(n).$ We will 
calculate the derivative of $f,$ and show that there is a point in 
$\mathbb{P}(n)$ at which is vanishes. This local minimum for $f$ will be a 
(unique) global minimum if $f$ is a (strictly) convex function. From 
\eqref{eq55} it is clear that to prove strict convexity of $f$ it
is enough to establish strict concavity of the function $h(X) = \tr
\,\,X^{1/2}.$
This is our next theorem.

\begin{thm}\label{thm7}
The map $h(X) = \tr \,\,X^{1/2} $ from
$\mathbb{P}(n)$ into $(0, \infty)$ is strictly concave; i.e., if
$X$ and $Y$ are two distinct elemens of $\mathbb{P}(n)$ and $\alpha, \beta $
are positive numbers with $\alpha + \beta = 1,$ then
\begin{equation}
 h(\alpha X +\beta Y) > \alpha h(X) +\beta h (Y)\label{eq56}.
\end{equation}
\end{thm}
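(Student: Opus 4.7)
The plan is to reduce strict concavity of the scalar function $h(X)=\tr\,X^{1/2}$ to the (well-known) operator concavity of the matrix function $X\mapsto X^{1/2}$ on $\mathbb{P}(n)$. That operator-level fact says that for $X,Y\in\mathbb{P}(n)$ and $0<\alpha<1$, writing $\beta=1-\alpha$,
\begin{equation*}
(\alpha X+\beta Y)^{1/2}\ge \alpha X^{1/2}+\beta Y^{1/2}
\end{equation*}
in the Loewner order; this is standard (cf.\ Chapter 4 of \cite{rbh1}), so I would simply invoke it. Taking traces of both sides instantly yields concavity of $h$.

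To upgrade this to \emph{strict} concavity, I will use the elementary fact that the trace is strictly monotone on positive semidefinite matrices: if $P\ge Q$ with $P,Q$ Hermitian and $\tr P=\tr Q$, then $P-Q$ is psd with zero trace, hence $P=Q$. So it suffices to prove that the Loewner inequality above is \emph{strict} whenever $X\ne Y$.

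For that step, I would argue by contradiction: suppose $(\alpha X+\beta Y)^{1/2}=\alpha X^{1/2}+\beta Y^{1/2}$. Squaring both sides (legal since both are psd) and using $\alpha-\alpha^{2}=\alpha\beta=\beta-\beta^{2}$, the equation collapses to
\begin{equation*}
X+Y=X^{1/2}Y^{1/2}+Y^{1/2}X^{1/2},
\end{equation*}
i.e.\ $(X^{1/2}-Y^{1/2})^{2}=0$. Since $X^{1/2}-Y^{1/2}$ is Hermitian and its square is zero, it must itself be zero, giving $X=Y$. Combined with the previous paragraph this proves \eqref{eq56}.

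I do not expect any real obstacle. The only point requiring any care is choosing a clean route to the operator concavity of $X\mapsto X^{1/2}$; since the paper already cites \cite{rbh1} for the Pusz--Woronowicz theory and related matters, quoting it there seems most in keeping with the paper's style. If one preferred a self-contained proof instead, an alternative route would be to use the integral representation $X^{1/2}=\frac{1}{\pi}\int_{0}^{\infty}t^{-1/2}X(X+t)^{-1}\,dt$ and show that each integrand contributes a strictly concave trace functional $X\mapsto n-t\,\tr(X+t)^{-1}$ (strict convexity of $X\mapsto\tr X^{-1}$ being verifiable by a direct Hessian computation), but this is a longer and less transparent path than the squaring argument above.
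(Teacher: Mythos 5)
Your proposal is correct and follows essentially the same route as the paper: invoke operator concavity of $X\mapsto X^{1/2}$, take traces, and obtain strictness by squaring the hypothetical equality $(\alpha X+\beta Y)^{1/2}=\alpha X^{1/2}+\beta Y^{1/2}$ to force $(X^{1/2}-Y^{1/2})^{2}=0$. The only cosmetic difference is the citation for operator concavity (the paper points to Chapter V of \cite{rbh} rather than \cite{rbh1}).
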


\begin{proof}
It is well-known that $X \longmapsto X^{1/2} $ is an operator concave
function. See
Chapter V of \cite{rbh}. So, we have
$$(\alpha X + \beta Y)^{1/2} \geq \alpha X^{1/2} + \beta Y^{1/2},$$
and hence
$$ \tr (\alpha X + \beta Y)^{1/2} \geq \alpha \,\, \tr X^{1/2} + \beta \,\, \tr
\,\, Y^{1/2}.$$
We have to show that in this last inequality the two sides cannot be equal if
$X \neq Y.$ Suppose
$$ \tr \left[(\alpha X + \beta Y)^{1/2} - (\alpha X^{1/2} + \beta Y^{1/2})
\right] = 0.$$
The matrix inside the square brackets is positive semidefinite. So, its trace
can be zero only if
$$(\alpha X + \beta Y)^{1/2} = \alpha X^{1/2} + \beta Y^{1/2}.$$
Square both sides, and then use the relations $ \alpha - \alpha^2 = \beta -
\beta^2 = \alpha \beta,$ to obtain
$$ \alpha \beta (X + Y - X^{1/2} Y^{1/2} - Y^{1/2} X^{1/2}) = 0. $$
Since $\alpha \beta \neq 0,$, this gives\\
$$ (X^{1/2} - Y^{1/2})^2 = 0,$$
and hence $X^{1/2} = Y^{1/2},$ and $X=Y.$
\end{proof}
Now we show that $f$ does have a minimum in $\mathbb{P}(n)$ by evaluating
the derivative $Df(X)$ and equating that to zero. A convenient summary of facts
about matrix differential calculus can be found in Chapter X of \cite{rbh}. \\

The nonlinear term in \eqref{eq55} is $ g(X) = (A^{1/2} X A^{1/2})^{1/2}.$ We
evaluate $ Dg(X)$ from first principles. The derivative of the function $\Psi
(A) =A^2$ is the linear map $D \Psi (A)$ defined as $D \Psi(A) (X) = AX +XA.$
The function $\varphi (A) = A^{1/2}$ on $\mathbb{P}(n)$ is the inverse of
$\Psi.$ Hence $ D \varphi (A) = \left[ D\Psi (\varphi (A))\right]^{-1} = \left[
D\Psi (A^{1/2})\right]^{-1}. $ Thus $D \varphi (A)$ is the inverse of the linear
map $X \longmapsto A^{1/2}X + X A^{1/2}. $ By well known facts about the
Sylvester matrix equation (see \cite{rbh} or \cite{br}) this inverse is given by the
formula
$$ D \varphi (A)(X) = \intop_0^\infty \,\,\, e ^{-tA^{1/2}} X \,\,e
^{-tA^{1/2}} \,\, dt.$$
Let $\lambda (X) = A^{1/2} X A^{1/2} .$ Then $g$ is the composite
$\varphi \circ \lambda.$ So, by the chain rule of differentiation,\\
\begin{eqnarray*}
D g(X)(Y) &=& (D\varphi (\lambda (X)) \circ D \lambda (X))(Y)\\
&=& D\varphi (A^{1/2} X A^{1/2})  (A^{1/2} \,\,Y \,\, A^{1/2})\\
&=&\intop_0^\infty \,\,\, e ^{-t(A^{1/2} X \,\,A^{1/2})^{1/2}}
(A^{1/2} YA^{1/2}) \,\,e^{-t(A^{1/2} X A^{1/2})^{1/2}}\,\, dt.
\end{eqnarray*}
Taking traces, and using the cyclicity of trace, we get
\begin{eqnarray*}
\tr \,\, D g (X)(Y) &=&\intop_0^\infty (\tr \,\, A^{1/2}
e^{-2t(A^{1/2} X \,\, A ^{1/2})^{1/2}} A^{1/2} Y)\,\, dt \\
&=& \tr \,\, A ^{1/2} (\intop_0^\infty e^{-2t(A^{1/2} X \,\,A^{1/2})^{1/2}}
\,\, dt) A^{1/2} Y.\\
\end{eqnarray*}
The last integral above is equal to $\frac{1}{2} (A^{1/2} X
A^{1/2})^{-1/2}.$ (Use the fact that $\intop_0^\infty e^{-t\alpha} \,\,dt =
\frac{1}{\alpha}$ for every $\alpha > 0.$) Hence
\begin{eqnarray*}
\tr D g(X)(Y) &=& \frac{1}{2} \tr A^{1/2} \left( A^{-1/2} X^{-1}
A^{-1/2}\right)^{1/2} A^{1/2}\,\, Y \\
&=& \frac{1}{2} (A \# X^{-1})Y.\\
\end{eqnarray*}
So, from \eqref{eq55} we see that
\begin{eqnarray*}
 Df(X)(Y) &=& \sum\limits^{m}_{j=1} w_j\tr (Y-(A_j \#
X^{-1})Y)\\
&=& \tr \left( I-\sum\limits^{m}_{j=1} w_j \left( A_j \#
X^{-1}\right) \right)Y.
\end{eqnarray*}
Thus $Df (X) = 0$ if and only if
\begin{equation}
I = \sum\limits^{m}_{j=1} w_j (A_j \# X^{-1}). \label{eq57}
\end{equation}
This is equivalent to saying
\begin{equation}
 X = \sum\limits^{m}_{j=1} w_j (X^{1/2} A_j X^{1/2})^{1/2}.
\label{eq58}
\end{equation}
Finally, we show that there exists a point $X$ in $\mathbb{P}(n)$ that 
satisfies the equation \eqref{eq58}. Indeed, if $\alpha I \leq A_j \leq \beta 
I,$ for all $ 1 \leq j \leq m,$ then this $X$ belongs to the compact convex set 
$\mathcal{K} = \{ X \in \mathbb{P}(n) :\alpha I \leq X \leq \beta I \}.$ To 
see this consider the function
$$F(X) = \sum\limits^{m}_{j=1} w_j (X^{1/2} A_j X^{1/2})^{1/2}.$$
Then note that $(X^{1/2}A_jX^{1/2})^{1/2}\le (\beta X)^{1/2}\le \beta I$ for all $X\in\mathcal{K}.$
By the same reasoning $(X^{1/2}A_jX^{1/2})^{1/2}\ge \alpha I$ for all $X\in\mathcal{K}.$ This shows that 
$F$ maps $\mathcal{K}$ into itself. By Brouwer's fixed point theorem there 
exists a point $X$ in  $\mathcal{K}$ such that $F(X)=X.$ This $X$ is a solution 
of the equation \eqref{eq58}.\\

We have proved the following theorem first obtained in \cite{ac}, building upon 
the earlier work in \cite{ks} and \cite{r}

\begin{thm}\label{thm8}
The minimisation problem \eqref{eq54} has a unique
solution
which is also the solution of the nonlinear matrix equation \eqref{eq58}.
\end{thm}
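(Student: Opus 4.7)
The plan is to combine the three ingredients that the text has already assembled: (a) the derivative computation, which shows that critical points of the objective $f(X)$ in \eqref{eq55} are exactly the solutions of the fixed point equation \eqref{eq58}; (b) Theorem \ref{thm7}, which will be used to establish strict convexity of $f$; and (c) the Brouwer argument, which gives existence of a solution of \eqref{eq58} inside the compact set $\mathcal{K}=\{\alpha I\le X\le \beta I\}$. Strict convexity together with existence of a critical point will deliver both existence and uniqueness of the global minimum of $f$ on $\mathbb{P}(n)$, and will simultaneously identify that minimum with the fixed point produced by Brouwer.

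First I would establish that $f$ is strictly convex on $\mathbb{P}(n)$. Inspecting \eqref{eq55}, the term $\sum w_j \tr A_j$ is constant and $\sum w_j \tr X$ is linear in $X$, so strict convexity of $f$ is equivalent to strict concavity of the function $X\mapsto \sum w_j \tr(A_j^{1/2}XA_j^{1/2})^{1/2}$. For each fixed positive definite $A_j$, the map $X\mapsto A_j^{1/2}XA_j^{1/2}$ is an invertible linear bijection of $\mathbb{P}(n)$ onto itself; composing it with the strictly concave map $h(Y)=\tr Y^{1/2}$ of Theorem \ref{thm7} yields a strictly concave function of $X$. A positive convex combination (with weights $w_j>0$) of strictly concave functions is strictly concave, so $f$ is strictly convex.

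Next I would argue existence of a minimizer. The text's Brouwer argument produces a point $X_\ast\in\mathcal{K}$ satisfying \eqref{eq58}, which via the derivative computation is equivalent to $Df(X_\ast)=0$. Since $f$ is strictly convex on the open convex set $\mathbb{P}(n)$, any critical point is automatically the unique global minimum: restricting $f$ to the line segment from $X_\ast$ to any other $Y\in\mathbb{P}(n)$ gives a strictly convex one-variable function whose derivative vanishes at $X_\ast$, so $f(Y)>f(X_\ast)$. This proves both that $X_\ast$ is the unique minimum of $f$ over $\mathbb{P}(n)$ and that the minimization problem \eqref{eq54} is equivalent to the nonlinear matrix equation \eqref{eq58}.

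The main subtle point is the strict concavity in Theorem \ref{thm7}; once that is in hand, the rest is largely bookkeeping. A secondary delicate point is that $f$ is defined on the non-compact open cone $\mathbb{P}(n)$, so a priori a minimizer need not exist; it is precisely the Brouwer fixed point argument (together with the derivative formula) that supplies a critical point inside the compact convex set $\mathcal{K}$, from which strict convexity upgrades this critical point to the unique global minimum.
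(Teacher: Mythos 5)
Your proposal is correct and follows essentially the same route as the paper: strict convexity of $f$ via Theorem~\ref{thm7}, the first-order condition $Df(X)=0 \Leftrightarrow$ \eqref{eq58}, Brouwer for existence of a solution of \eqref{eq58} in the compact set $\mathcal{K}$, and strict convexity to promote that critical point to the unique global minimizer. The only difference is that you spell out a couple of steps the paper leaves implicit (that composing $h$ with the linear bijection $X\mapsto A_j^{1/2}XA_j^{1/2}$ preserves strict concavity, and that for a strictly convex function on an open convex set a critical point is automatically the unique global minimum), which is sound and does not change the argument.
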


We do not know how to obtain the solution of \eqref{eq58} in an explicit form.
In the special case $m=2,$ with $A_1 = A,\,\, A_2 =B, $ and $(w_1, w_2) =
(1-t,t),$ the equation \eqref{eq57} reduces to
\begin{equation}
 I = (1-t)(A \# X^{-1}) + t (B \# X^{-1}).
\label{eq62}
\end{equation}
The solution to this equation is
\begin{equation*}
X=\argmin\left[(1-t)d^2(X,A)+td^2(X,B)\right].
\end{equation*}
By the definition of geodesics with respect to the metric $d,$
such an $X$ is the unique point on the geodesic segment joining $A$ and $B$ at distance
$td(A,B)$ from $A.$
In other words 
\begin{equation}
 X = A \diamond_t B = (1-t)^2 A + t^2 B + t (1-t) \left[(AB)^{1/2} +
(BA)^{1/2}\right]. \label{eq60}
\end{equation}

The equation \eqref{eq58} can be used to obtain some important order properties
of the Wasserstein barycentre. The next theorem is a multivariable analogue of
Theorem \ref{thm6}.

\begin{thm}\label{thm9}
Let $A_1, \ldots, A_m$ be positive definite matrices and
let $w= (w_1,\ldots, w_m)$ be a weight vector. Then
\begin{equation}
 \Omega (w; A_1,\ldots, A_m) \leq \sum\limits^{m}_{j=1} w_j A_j. \label{eq61}
\end{equation}
\end{thm}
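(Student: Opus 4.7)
The plan is to exploit the fixed-point characterisation \eqref{eq58} of the Wasserstein barycentre together with the matrix convexity of the squaring map, which is the same ingredient that powered Theorem \ref{thm6}. The two-variable proof of Theorem \ref{thm6} rewrote $A\diamond_t B$ as $A^{-1/2}[(1-t)A+t(A^{1/2}BA^{1/2})^{1/2}]^2 A^{-1/2}$ and then applied $((1-t)X+tY)^2\le (1-t)X^2+tY^2$. The same idea adapts almost verbatim to the multivariable barycentre, once the fixed-point equation is read as expressing the barycentre itself as a weighted arithmetic mean of square roots.

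Concretely, let $X=\Omega(w;A_1,\dots,A_m)$. By Theorem \ref{thm8}, $X$ satisfies
\begin{equation*}
X=\sum_{j=1}^m w_j\,(X^{1/2}A_jX^{1/2})^{1/2}.
\end{equation*}
Set $Y_j=(X^{1/2}A_jX^{1/2})^{1/2}$, so each $Y_j$ is a positive definite Hermitian matrix and $X=\sum_j w_j Y_j$. Squaring gives $Y_j^2=X^{1/2}A_jX^{1/2}$, so
\begin{equation*}
A_j=X^{-1/2}Y_j^2 X^{-1/2},\qquad \sum_{j=1}^m w_j A_j=X^{-1/2}\Bigl(\sum_{j=1}^m w_j Y_j^2\Bigr)X^{-1/2}.
\end{equation*}

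The decisive step is the matrix convexity of $t\mapsto t^2$: for Hermitian $Y_1,\dots,Y_m$ one has $(\sum w_j Y_j)^2\le \sum w_j Y_j^2$ (this follows from $(Y_i-Y_j)^2\ge 0$ after a short expansion, exactly as in the two-term version cited in Theorem \ref{thm6}). Applied to our $Y_j$, and using $\sum w_j Y_j=X$, it yields $X^2\le \sum w_j Y_j^2$.

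Sandwiching this inequality between $X^{-1/2}$ on both sides preserves the order, so
\begin{equation*}
X = X^{-1/2}X^2 X^{-1/2}\le X^{-1/2}\Bigl(\sum_{j=1}^m w_j Y_j^2\Bigr)X^{-1/2}=\sum_{j=1}^m w_j A_j,
\end{equation*}
which is \eqref{eq61}. I do not expect any real obstacle here: the only non-formal ingredient is the matrix convexity of squaring on Hermitian matrices, and the fixed-point equation \eqref{eq58} is tailor-made to turn that convexity into precisely the desired majorisation. The proof for $m=2$, $w=(1-t,t)$ recovers Theorem \ref{thm6} as a special case, which is a reassuring consistency check.
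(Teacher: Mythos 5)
Your argument is exactly the paper's: the paper squares the fixed-point identity $\Omega=\sum w_j(\Omega^{1/2}A_j\Omega^{1/2})^{1/2}$, applies matrix convexity of $X\mapsto X^2$ to obtain $\Omega^2\le\Omega^{1/2}\bigl(\sum w_jA_j\bigr)\Omega^{1/2}$, and conjugates by $\Omega^{-1/2}$. Your introduction of $Y_j$ is only notational, so this is the same proof.
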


\begin{proof}
 The matrix $ \Omega = \Omega (w; A_1, \ldots, A_m)$ obeys the relation
$$\Omega = \sum\limits^{m}_{j=1} w_j (\Omega^{1/2} A_j \Omega^{1/2})^{1/2}.$$
Square both sides and then use the fact that the function$f(A) = A^2 $ is
matrix convex. This gives
$$\Omega^2 \leq \sum\limits^{m}_{j=1} w_j \Omega^{1/2} A_j
\Omega^{1/2} = \Omega^{1/2} \left( \sum\limits^{m}_{j=1} w_j A_j \right)
\Omega^{1/2}.$$
The inequality \eqref{eq61} follows from this.
\end{proof}
Theorem 9 is much stronger than the known inequality $\tr \,\, \Omega \,\, \leq 
\,\, \tr \,\, \sum w_j A_j,$ which has been proved in \cite{am}. (See the last 
inequality in Theorem 4.2 there.)\\
\section{The $m$-Coupling Problem}
We explain briefly how the Wasserstein barycentre is useful in solving the 
several variable version of the problem considered in Section 3.\\

Let $x_1, \ldots, x_m$ be random vectors in $\mathbb{C}^n,$ each having zero 
mean, and with covariance matrices $A_1, \ldots, A_m.$ We are asked to find a 
tuple $( x_1, \ldots, x_m)$ that solves the minimisation problem
\begin{equation}
\min \,\, E \sum_{1 < j} \| x_i - x_j\|^2. \label{eq65}
\end{equation}
This is the same problem as the one of maximising $E\|\sum x_j\|^2.$ A little 
more generally, we consider the problem
\begin{equation}
\max \,\, E \| \sum_{j=1}^m w_j \,\, x_j\|^2, \label{eq66}
\end{equation}
where $w_1, \ldots, w_m$ are given weights.\\

Let $\Omega = \Omega (w; A_1, \ldots, A_m)$ and let $R_j = \Omega^{-1} \# A_j 
\,\, , \,\, 1 \leq j \leq m.$ Let $z = \sum w_jx_j.$ Then $\langle z,z \rangle 
\,\,= \,\, \sum w_j \langle z,x_j \rangle.$ If $T$ is any positive definite 
matrix, and $x,y$ any two vectors, then using the Schwarz inequality and the 
arithmetic-geometric mean inequality, we see that
\begin{eqnarray}
\langle x, y \rangle &=& \langle T^{1/2} x, \, T^{-1/2} y \rangle \,\, 
\leq \|T^{1/2} x\| \, \| T^{-1/2} y\| \nonumber\\
&=&  \langle x, T x \rangle^{1/2}   \,\, \langle y, T^{-1} y\rangle^{1/2}  
\nonumber\\
&\leq& \frac{1}{2} \left[ \langle x, Tx \rangle \,\, + \,\, \langle y, T^{-1} y 
\rangle \right]. \label{eq67} 
\end{eqnarray}
Hence,
\begin{eqnarray*}
 \langle z, z \rangle \,\, \leq \frac{1}{2} \left[ \sum_{j=1}^m \,\, w_j \,\, 
\langle z, R_j z \rangle + \sum_{j=1}^m \,\, w_j \,\, \langle x_j, R_j^{-1} x_j 
\rangle \right].
\end{eqnarray*}
From \eqref{eq57} we know that $\sum_{j=1}^m \,\, w_j \,R_j = I.$ So, the 
inequality above yields
\begin{eqnarray*}
 \langle z, z \rangle \,\, \leq \sum_{j=1}^m \,\, w_j \,\, 
\langle x_j, R_j^{-1} x_j \rangle.
\end{eqnarray*}
Thus
\begin{eqnarray*}
E \|z\|^2 \,\, \leq \,\, \sum_{j=1}^m \,\, w_j \,\, E
\langle x_j, R_j^{-1} x_j \rangle.
\end{eqnarray*}
Since $x_j$ has covariance matrix $A_j,$ this gives
\begin{eqnarray}
 E \|z\|^2 \,\, &\leq& \,\, \sum_{j=1}^m \,\, w_j \,\, \tr \,\,R_j^{-1} A_j 
\nonumber\\
&=& \sum_{j=1}^m \,\, w_j \,\, \tr \,\,A_j^{1/2} R_j^{-1} A_j^{1/2} 
\nonumber\\
&=& \sum_{j=1}^m \,\, w_j \,\, \tr \,\,A_j^{1/2} (\Omega \# A_j^{-1}) 
A_j^{1/2}\nonumber\\
&=& \sum_{j=1}^m \,\, w_j \,\, \tr \,\,(A_j^{1/2} \Omega A_j^{1/2}) \# I
\nonumber\\
&=& \tr \,\, \sum_{j=1}^m \,\, w_j \,\, (A_j^{1/2} \Omega A_j^{1/2})^{1/2}
\nonumber\\
&=& \tr \,\, \Omega. \label{eq68}
\end{eqnarray}
Note that both the inequalities in \eqref{eq67} are equalties if $y = Tx.$ 
Hence, there is equality at the first step in \eqref{eq68} if $z = R_j x_j$ 
for $1 \leq j \leq m.$ This can be achieved by choosing $x_1$ arbitrarily and 
then putting $x_j = R_jR_1^{-1} x_1$ for $2 \leq j \leq m.$\\

To sum up, we have shown that the problem \eqref{eq66} has the solution
\begin{equation}
 \max E\|  \sum_{j=1}^m  w_j \,\, x_j \|^2 = \tr \,\, \Omega  (w ; A_1, 
\ldots , A_m). \label{eq69} 
\end{equation}
The maximum is attained at every m-tuple
\begin{equation}
(x_1, \,\, R_2 R_1^{-1} \,\, x_1 , \,\, R_3 R_1^{-1} \,\, x_1 , 
\ldots, R_m R_1^{-1}\,\, x_1 ),\label{eq70}
\end{equation}
where $x_1$ is chosen arbitrarily subject to the given conditions that it has 
mean $0$ and covariance matrix $A_1.$ Note that, then we have
\begin{eqnarray} 
\sum_{j=1}^m \,\, w_j x_j &=& w_1 x_1 \,\, + \,\, \sum_{j=2}^m \,\, w_j 
 R_j  R_1^{-1}\,\, x_1 \nonumber \\
&=& \sum_{j=1}^m \,\, w_j  R_j  R_1^{-1}\,\, x_1 \,\, = \,\, 
R_1^{-1}\,\, x_1, \label{eq71} 
\end{eqnarray}
the last equality being a consequence of the fact that $\sum \limits_{j=1}^m 
\,\, w_j R_j = I.$ The maps $R_jR_1^{-1}$ are said to provide an optimal 
coupling between $x_1, \ldots, x_m$ that occur as a solution of \eqref{eq66}.\\

Many of the ideas presented in Sections 6 and 7 go back to the paper of Knott 
and Smith \cite{ks}. Among other things, the matrix equation \eqref{eq58}, that 
a solution to the minimisation problem \eqref{eq54} must satisfy, is derived 
there. However, questions about the existence and uniqueness of solutions of 
this equation are not settled in this paper. The existence was established by 
Ruschendorf and Uckelmann in \cite{r}, and the uniqueness by Agueh and Carlier 
in \cite{ac}. The elegant argument using Brouwer's fixed point theorem to 
establish the existence of a solution occurs in \cite{ac}, and we have adopted 
it verbatim. Our proof of uniqueness is different, and uses ideas more familiar 
in matrix analysis. We must add that the problem studied in \cite{ac} is the 
more general problem of the barycentre of measures. The matrix case that we are 
discussing corresponds to the special Gaussian measures.

\section{Computing the Barycentre}
Whereas for two matrices $A$ and $B$ their barycentre is given by an explicit 
formula \eqref{eq48}, no such formula is known in the case of three or more 
matrices. We know only that $\Omega$ is the unique solution of the equation 
\eqref{eq57}, or equivalently of \eqref{eq58}. The latter suggests that it may 
be possible to compute $\Omega$ by a fixed point iteration. Such an iteration 
has been developed in a very interesting recent paper \cite{am}. In this 
section we explain the main ideas of this paper, restricting ourselves to matrix 
analytic techniques, and simplifying some proofs.\\

Throughout this section $A_1, \ldots, A_m$ are given positive definite matrices 
and $w= (w_1, \ldots, w_m)$ a given set of weights. For each $A \in 
\mathbb{P}(n)$ let
\begin{eqnarray}
H_j (A) &=& A^{-1} \# A_j\,\, , \,\,\,\,\,\,\,\ 1 \leq j \leq m, \label{eq72}\\
H(A) &=& \sum_{j=1}^m \,\, w_j \,\, H_j (A), \label{eq73}\\
K(A) &=& A^{-1/2} \left(\sum_{j=1}^m \,\, w_j \,\,(A^{1/2} A_j 
A^{1/2})^{1/2}\right)^2 A^{-1/2}. \label{eq74}
\end{eqnarray}
We note that 
\begin{eqnarray}
 K(A) = H (A) AH (A). \label{eq75}
\end{eqnarray}
Also, note that
\begin{eqnarray}
A^{-1} \# K(A) &=& A^{-1/2} \left( A^{1/2} \,\, K(A) A^{1/2} \right)^{1/2} 
A^{-1/2} \nonumber \\
&=& A^{-1/2} \left( \sum_{j=1}^m \,\, w_j \,\,(A^{1/2} A_j 
A^{1/2})^{1/2} \right) A^{-1/2} \nonumber \\
&=& \sum \,\, w_j \,\, H_j (A) \,\, = H(A). \label{eq76}
\end{eqnarray}
Equations \eqref{eq72} and \eqref{eq76} say that $H_j (A)$ and $H(A)$ are the 
optimal transport maps from $A$ to $A_j$ and to $K(A),$ respectively. We define 
the {\it variance} of $A$ as
\begin{eqnarray}
 V(A) = \sum_{j=1}^m \,\, w_j \, d^2 (A, A_j). \label{eq77}
\end{eqnarray}
The following variance inequality is a rephrasing in our context of Proposition 
3.3 in \cite{am}.
\begin{thm} \label{thm10}
 For every positive definite matrix $A$ we have
\end{thm}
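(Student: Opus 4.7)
The plan is to establish the variance inequality
$$V(K(A)) + d^2(A, K(A)) \leq V(A)$$
by a coupling argument in the spirit of Section 3. The crucial observation is already contained in \eqref{eq76}: $H(A)$ is not merely the weighted average of the transport maps $H_j(A)$ but is itself the optimal transport map from $A$ to $K(A)$. Granted this, the rest will be elementary algebra with random vectors in $\mathbb{C}^n$.

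Concretely, I would pick a random vector $x$ with mean zero and covariance $A$, and set $y_j = H_j(A)\,x$ for $1 \leq j \leq m$. Since $H_j(A) = A^{-1} \# A_j$ is the optimal transport from $A$ to $A_j$ (the discussion around \eqref{eq46}), each $y_j$ has covariance $A_j$ and $E\|x - y_j\|^2 = d^2(A, A_j)$. I would then set $z = H(A)\,x = \sum_{j=1}^m w_j\, y_j$; by \eqref{eq75} and \eqref{eq76} the vector $z$ has covariance $K(A)$ and $E\|x - z\|^2 = d^2(A, K(A))$. The pair $(z, y_j)$ is only \emph{some} coupling of vectors with covariances $K(A)$ and $A_j$, so one merely has the bound $d^2(K(A), A_j) \leq E\|z - y_j\|^2$.

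The heart of the argument is the elementary identity
$$\|x - y_j\|^2 - \|z - y_j\|^2 \,=\, \|x - z\|^2 + 2\,\Re \langle x - z,\, z - y_j \rangle,$$
valid for any three vectors in $\mathbb{C}^n$. Multiplying by $w_j$, summing over $j$, and taking expectations, the cross-term disappears because
$$\sum_{j=1}^m w_j(z - y_j) \,=\, z - \sum_{j=1}^m w_j H_j(A)\,x \,=\, z - H(A)\,x \,=\, 0.$$
What remains is $V(A) - \sum_j w_j E\|z - y_j\|^2 = d^2(A, K(A))$, and combining this with the upper bound $V(K(A)) \leq \sum_j w_j E\|z - y_j\|^2$ yields the claimed inequality.

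The only delicate point is the asymmetry in how the transport representation enters: $d(A, A_j)$ and $d(A, K(A))$ appear as \emph{equalities} because $H_j(A)$ and $H(A)$ are the optimal maps out of $A$, whereas $d(K(A), A_j)$ gives only an \emph{inequality} since $(z, y_j)$ is a generic, typically non-optimal coupling of $K(A)$ with $A_j$. It is exactly this asymmetry that produces the correct sign; no matrix convexity, operator monotonicity, or other deeper inequality is required.
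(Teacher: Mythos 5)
Your proposal is correct and is essentially the paper's proof: you pick $x$ with covariance $A$, set $y_j = H_j(A)x$ and $z = H(A)x = \sum w_j y_j$, use the Euclidean variance identity (which the paper quotes as \eqref{eq79} and you derive by expanding the square and noting the cross-term vanishes), and close with the coupling bound $E\|z - y_j\|^2 \geq d^2(K(A), A_j)$. Your $z$ is the paper's $\overline{y}$; the structure and the one delicate point (optimal maps out of $A$ give equalities, the generic coupling $(z,y_j)$ gives an inequality) are the same.
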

\begin{equation}
 V(A) \geq V(K(A)) \,\, + \,\, d^2 (A,K(A)). \label{eq78}
\end{equation}
\begin{proof}
 Let $y_1, \ldots, y_m$ be vectors in $\mathbb{C}^n$ and let $\overline{y} = 
\sum\limits_{j=1}^m \,\, w_j y_j$ be their weighted arithmetic mean. Then for 
every $x 
\in \mathbb{C}^n$ we have
\begin{equation}
\sum_{j=1}^m \,\, w_j \,\, \|x-y_j \|^2 \,\,= \,\, \sum_{j=1}^m \,\, w_j \,\, 
\|\overline{y}-y_j \|^2 + \|x-\overline{y}\|^2. \label{eq79}
\end{equation}
(This is the variance equality in Euclidean space that \eqref{eq78} mimics. The 
Euclidean distance is replaced by the metric $d,$ the points $y_j$ by the 
matrices $A_j,$ the mean $\overline{y}$ by $K(A),$ and we have an inequality in 
place of equality.)\\

Choose a vector $x$ in $\mathbb{C}^n$ with mean $0$ and covariance matrix $A.$ 
For $1\leq j\leq m,$ let $y_j = H_j(A)x,$ we have from the results in Section 3
$$ d^2 (A,A_j) \,\, = \,\, E \,\, \|x-H_j(A)x\|^2 \,\, = \,\, E 
\,\, \|x-y_j\|^2.$$
Hence,
\begin{equation}
 V(A) = \sum_{j=1}^m \,\, w_j \,\, E\|x-y_j\|^2. \label{eq80}
\end{equation}
Similarly, since H(A) is the optimal transport map from $A$ to $K(A),$ we have
$$ d^2 (A,K(A)) \,\, = \,\, E \,\, \|x-H(A)x\|^2.$$
But $H(A)x \,\, = \,\, \sum w_j \, H_j (A)x \,\, = \,\, \sum w_j y_j \,\, 
= \overline{y}.$ So, 
\begin{equation}
d^2 (A,K(A)) \,\, = \,\, E \,\, \|x-\overline{y}\|^2. \label{eq81}
\end{equation}
 Since $H(A)$ is the transport map from $A$ to $K(A)$ and $x$ has covariance 
matrix $A,$ it follows that $\overline{y}$ has $K(A)$ as its covariance matrix. 
Hence
\begin{equation}
 E \,\, \|\overline{y}-y_j\|^2 \,\, \geq d^2 (K(A), A_j)
, \,\,\ 1 \leq j \leq m. \label{eq82}
\end{equation}
The relations \eqref{eq79}-\eqref{eq82} put together lead to the inequality 
\eqref{eq78}.
\end{proof}
{\it Remark.} Using the definition of the variance $V(A)$ and of the metric 
$d(A,B)$ it can be seen that the inequality \eqref{eq78} is equivalent to the 
trace inequality
\begin{equation}
 \sum_{j=1}^m \,\, w_j \,\, \tr \,\, (A^{1/2}_j\,\, K(A) \,\, A_j^{1/2})^{1/2} 
\,\, \geq \,\, \tr \,\, K(A). \label{eq83}
\end{equation}
It might be very difficult to prove this using the usual matrix analysis 
arguments. The very special case $A=I$ of \eqref{eq83} says
\begin{equation}
 \tr \,\, \sum \,\, w_j \left( A_j^{1/2} \left(\sum w_j A_j^{1/2}\right)^2 \, 
A_j^{1/2} \right)^{1/2} \,\, \geq \,\, \tr \left( \sum \,\, w_j A_j^{1/2} 
\right)^2. \label{eq84}
\end{equation}
From the inequality (IX.11) on page 258 of \cite{rbh} we have
\begin{eqnarray*}
\tr \,\, \left( A_j^{1/2} \left( \sum \,\, w_j \, A_j^{1/2} \right)^2 
A_j^{1/2}\right)^{1/2} &\geq& \tr \,\, A_j^{1/4} \left( \sum \,\, w_j 
A_j^{1/2} 
\right)A_j^{1/4}.\\
&=& \tr \,\, A_j^{1/2} \left( \sum \,\, w_j A_j^{1/2} \right).
\end{eqnarray*}
The inequality \eqref{eq84} follows from this. So, even the special case $A=I$ 
of \eqref{eq83} needs rather intricate arguments. Results proved in the context 
of optimal transport could thus add to the tools used in deriving matrix 
inequalities.\\

The next theorem is the main result (Theorem 4.2) of \cite{am}. Some steps in 
the 
proof have been simplified.
\begin{thm} \label{thm11}
 Let $S_o$ be any positive definite matrix and for $n \geq 0$ define $S_{n+1} = 
K(S_n),$ where $K$ is the map defined in \eqref{eq74}. Then 
\end{thm}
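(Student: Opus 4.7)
The plan is to prove that the iterates $S_n$ converge to the Wasserstein barycentre $\Omega=\Omega(w;A_1,\ldots,A_m)$, using the variance inequality \eqref{eq78} as a Lyapunov-type descent property together with compactness and the uniqueness statement of Theorem \ref{thm8}.

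First, telescoping the inequality $V(S_n)\ge V(S_{n+1})+d^2(S_n,S_{n+1})$ yields
\[
V(S_0)\ge V(S_N)+\sum_{n=0}^{N-1}d^2(S_n,S_{n+1})\qquad\text{for every }N.
\]
Since $V(X)\ge 0$, the sequence $V(S_n)$ is decreasing and bounded below, hence converges to some $V_\infty\ge V(\Omega)$, and $\sum_{n\ge 0}d^2(S_n,S_{n+1})<\infty$; in particular $d(S_n,S_{n+1})\to 0$.

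Next, I would verify that $\{S_n\}$ lies in a compact subset of $\mathbb{P}(n)$. An upper bound on $\|S_n\|$ follows from an elementary trace inequality: by Cauchy--Schwarz, $\tr(A_j^{1/2}XA_j^{1/2})^{1/2}\le\sqrt{n\|A_j\|\,\tr X}$, so $V(X)\ge \tr X-C_1\sqrt{\tr X}-C_0$ with constants depending only on the $A_j$; hence $V(S_n)\le V(S_0)$ keeps $\tr S_n$, and therefore $\|S_n\|$, bounded. A lower bound on $\lambda_{\min}(S_n)$ is the delicate step. Writing $F(A)=\sum_j w_j(A^{1/2}A_jA^{1/2})^{1/2}$ as in the proof of Theorem \ref{thm8}, and assuming $\alpha I\le A_j\le\beta I$ for all $j$, operator monotonicity of the square root gives $F(A)\ge\sqrt{\alpha}\,A^{1/2}$; combined with the identity $K(A)=A^{-1/2}F(A)^2A^{-1/2}$ this yields a recursive lower bound on $\lambda_{\min}(K(A))$ in terms of $\lambda_{\min}(A)$ and $\lambda_{\max}(A)$, which, together with the uniform upper bound already obtained, prevents the iterates from collapsing onto the singular boundary of $\mathbb{P}(n)$.

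With compactness in hand, let $S_{n_k}\to S^*\in\mathbb{P}(n)$ be any convergent subsequence. Since $K$ is continuous on $\mathbb{P}(n)$ and $d(S_{n_k},K(S_{n_k}))=d(S_{n_k},S_{n_k+1})\to 0$, the limit obeys $K(S^*)=S^*$. Squaring the defining relation \eqref{eq74}, $K(S^*)=S^*$ forces $F(S^*)^2=(S^*)^2$, and taking positive square roots gives $F(S^*)=S^*$, which is exactly the barycentre equation \eqref{eq58}. By Theorem \ref{thm8} this equation has the unique solution $\Omega$, so $S^*=\Omega$. Since every convergent subsequence of $(S_n)$ has the same limit $\Omega$, the whole sequence converges to $\Omega$.

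The principal obstacle is the lower bound keeping $\lambda_{\min}(S_n)$ bounded away from $0$. The variance $V$ does not diverge on the singular boundary of the psd cone (for instance $V$ is finite at $A=0$), so a plain sublevel-set argument for $V$ is insufficient, and one has to exploit the explicit form of $K$ together with the summability $\sum d^2(S_n,S_{n+1})<\infty$ to rule out accumulation on the boundary.
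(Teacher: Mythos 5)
Your proposal follows essentially the same route as the paper: use the variance inequality \eqref{eq78} as a Lyapunov-type descent estimate, extract a convergent subsequence from a bounded orbit, show the subsequential limit is a fixed point of $K$, and invoke the uniqueness in Theorem \ref{thm8} to identify the limit with $\Omega$. Two technical differences: for the upper bound you use a trace Cauchy--Schwarz estimate, whereas the paper obtains the cleaner operator inequality $S_{n+1}\le\sum w_jA_j$ directly from matrix convexity of $X\mapsto X^2$; and you derive $d(S_n,S_{n+1})\to 0$ by telescoping, whereas the paper argues $V(K(S))=V(S)$ via continuity and then applies \eqref{eq78} once at the limit. You are also more explicit than the paper that \emph{every} subsequential limit equals $\Omega$, which is what upgrades subsequential convergence to full convergence.

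There are, however, two substantive issues. First, the lower bound you flag as delicate is indeed the crux, and your proposed fix does not close it. From $F(A)\ge\sqrt{\alpha}\,A^{1/2}$ one cannot square (Loewner order is not preserved by squaring); what one can extract is $\lambda_{\min}(F(A))\ge\sqrt{\alpha\,\lambda_{\min}(A)}$ and hence $\lambda_{\min}(K(A))\ge(\alpha/\lambda_{\max}(A))\,\lambda_{\min}(A)$. With the uniform upper bound $\lambda_{\max}(S_n)\le\Lambda$ this gives only $\lambda_{\min}(S_{n+1})\ge(\alpha/\Lambda)\lambda_{\min}(S_n)$, which bounds the \emph{rate} of decay but still permits $\lambda_{\min}(S_n)\to 0$ geometrically; it does not keep the orbit away from the singular boundary, and continuity of $K$ (and the fixed-point identification with $\Omega$) is only available in the interior $\mathbb{P}(n)$. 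To be fair, the paper's own proof is silent on this point as well, so you correctly located a genuine gap --- but the sketch you offer to fill it would fail as written, and a different argument (as in the source \cite{am}) is needed. Second, your proposal addresses only part (i); the theorem also asserts the trace monotonicity in (ii), $\tr S_n\le\tr S_{n+1}\le\tr\Omega$ for $n\ge 1$, which the paper proves by a separate computation combining \eqref{eq85}--\eqref{eq88}, and which your write-up omits entirely.
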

\begin{eqnarray}
{\rm (i)}&{}& \qquad \underset{n \rightarrow \infty}{\lim}\,\,\, S_n
\,\, = \Omega
\nonumber\\
{\rm (ii)}&{}& \qquad \tr \, S_n \leq \, \tr \, S_{n + 1} \,\, \leq
\,\, \tr \, \Omega \,\, \mbox{for all}\,\, n \geq 1.\nonumber
\end{eqnarray}
\begin{proof}
 By definition
$$ S_{n+1} \,\, = \,\, S_n^{-1/2} \left( \sum_{j=1}^m \,\, w_j \left(S_n^{1/2} 
A_j \,\, S_n^{1/2}\right)^{1/2}\right)^2 \, S_n^{-1/2}.$$
The square function is matrix convex. Hence,
\begin{eqnarray*}
S_{n+1} \,\, &\leq& \,\, S_n^{-1/2} \,\,\left( \sum_{j=1}^m \,\, w_j \, 
S_n^{1/2} A_j \, S_n^{1/2} \right) S_n^{-1/2}.\\
&=& \sum_{j=1}^m \,\, w_j \, A_j.
\end{eqnarray*}
Thus the sequence $\{S_n\}$ is a bounded sequence in $\mathbb{P}(n).$ Hence it 
has a subsequence converging to a limit $S.$ By the variance inequality 
\eqref{eq78}, $V(S_n) \geq V(S_{n+1})$ for all $n.$ So $\{V(S_n)\}$ is a 
decreasing sequence of positive numbers. Hence it converges. We must have $\lim 
\,\, V(S_n) = V(S).$ Since $K$ is a continuous function, this implies $\lim \,\,
V(K(S_n)) = V(K(S)).$ But $K(S_n) =S_{n+1}.$ So, $ V(K(S)) = V(S).$ Hence, 
using the variance inequality \eqref{eq78}, we have $d^2(S, K(S)) = 0.$ This 
means 
$S=K(S).$ From the definition of $K(S)$ in \eqref{eq74}, this is possible if 
and only if $S=\Omega (w; \, A_1, \ldots, A_m).$ This proves part (i).\\

By the definition of $H(A)$ in \eqref{eq73} we have for every $A$
\begin{eqnarray*}
 A^{1/2} \,\, H(A) A^{1/2} &=& \sum_{j=1}^m \,\, w_j \,\, A^{1/2} \,\, (A^{-1} 
\# A_j) A^{1/2}\\
&=& \sum_{j=1}^m \,\, w_j \,\, (A^{1/2} \,\, A_j \, A^{1/2})^{1/2},
\end{eqnarray*}
and hence,
$$ \sum_{j=1}^m \,\, w_j \,\,d^2(A,A_j) \,\, = \,\, \tr A \,\, + \,\,   
\sum_{j=1}^m \,\, w_j  \tr \, A_j \,\, - \,\, 2\tr A^{1/2} 
\,H(A) A^{1/2}.$$
From this we can see that 
\begin{eqnarray}
V(S_n)  - V(S_{n+1})  =  \tr \,\, S_n - \tr \,\, S_{n+1} &-& 
2\tr \,\, S_n^{1/2} \,\, H(S_n) S_n^{1/2} \nonumber\\
&+& 2\tr \,\, S_{n+1}^{1/2} \,\, H(S_{n+1}) S_{n+1}^{1/2}, \label{eq85}
\end{eqnarray}
and
\begin{equation}
d^2 (S_n, S_{n+1}) =  \tr \,\, S_n + \tr \,\, S_{n+1} - 
2\tr \,\, S_n^{1/2} \,\, H(S_n) S_n^{1/2}. \label{eq86}
\end{equation}
The variance inequality \eqref{eq78} together with these two relations gives
\begin{equation}
\tr \,\, S_{n+1} \,\, \leq \,\, \tr \,\, S_{n+1}^{1/2} \,\, H 
(S_{n+1}^{1/2})\,\, S_{n+1}^{1/2}. \label{eq87}
\end{equation}
From \eqref{eq86} and \eqref{eq87} we obtain
\begin{eqnarray}
 0 &\leq& d^2 \,\, (S_{n+1}, S_{n+2}) \nonumber \\ 
&=& \tr \,\, S_{n+1} + \tr \,\, S_{n+2} \,\ - \,\, 2 \tr \,\, S_{n+1}^{1/2} H 
(S_{n+1})\,\, S_{n+1}^{1/2} \nonumber \\
&\leq& \tr \,\,S_{n+2} \,\, - \,\,  \tr \,\, S_{n+1}^{1/2} \,\, H (S_{n+1})\,\, 
S_{n+1}^{1/2}. \label{eq88}
\end{eqnarray}
Finally, from \eqref{eq87} and \eqref{eq88} we see that 
$$ \tr \,\, S_{n+1} \leq \tr \,\, S_{n+2}.$$
That proves (ii).
\end{proof}
\section{remarks}
The geometric mean $A\#B$ has played a crucial role at several places in this 
paper. This is the midpoint of the geodesic joining $A$ and $B$ with the 
Riemannian metric $\delta$ defined in \eqref{eq43} and \eqref{eq44}. The 
barycentre of $m$ matrices $A_1, \ldots, A_m$ with weights $w_1, \ldots, w_m$ 
with respect to this metric is defined as
$$G (w; A_1, \ldots, A_m) \,\,= \,\, \underset{X > 0}{\argmin} \sum_{j=1}^m 
w_j \delta^2 (X,A_j).$$
This has been an object of intense study in recent years. See \cite{rbh1} 
\cite{rbh3} \cite{bh} \cite{bk} \cite{h} \cite{ll} \cite{lp1} \cite{lp2} 
\cite{m1}. A natural 
question, from the perspective of matrix analysis, would be to find comparisons 
between the two means $G$ and $\Omega.$ \\
Another classical family of means, called the {\it power means} is defined as
$$ Q_t (w; A_1, \ldots, A_m) = \left(\sum_{j=1}^m w_j A_j^t \right)^{1/t}, 
\,\,\, t > 0. $$
These play an important role in analysis. When $t=\frac{1}{2},$ we have
$$ Q_{1/2} (w; A_1, \ldots, A_m) = \left(\sum_{j=1}^m w_j A_j^{1/2} 
\right)^2.$$
In the special case when $A_1, \ldots, A_m$ are commuting matrices, the 
Wasserstein mean $\Omega$ and the mean $Q_{1/2}$ coincide. If we let
$$ \rho (A,B) \,\, = \,\, \|A^{1/2} - B^{1/2} \|_2 \,\, = \,\, \left[ \tr A + 
\tr B - 2\tr A^{1/2} B^{1/2} \right]^{1/2},$$
then
$$ Q_{1/2} (w; A_1, \ldots, A_m) = \underset{X > 0}{\argmin} \,\, \sum_{j=1}^m 
w_j \rho^2 (X, A_j)$$
It is natural to ask for comparisons between the means $ Q_{1/2}$ and 
$\Omega.$\\

These problems are studied in our forthcoming papers.

\vskip.2in
\noindent{\it Acknowledgement.} The work of R. Bhatia is supported by a J. C. 
Bose National Fellowship and of T. Jain by a SERB Women Excellence 
Award. The work of Y. Lim is supported by the National Research Foundation
 of Korea (NRF) grant founded by the Korea government (MEST) (No. 2015R1A3A2031159).
\vskip.3in

\vskip.2in
\hfill{December 4, 2017}

% \vskip0.3in
% \begin{flushright}
%  Rajendra Bhatia
% \end{flushright}

\end{document}